\theoremstyle{plain}
\newtheorem{thm}{Theorem}[section]
\newtheorem{dfn}[thm]{Definition}
\newtheorem{lem}[thm]{Lemma}
\newtheorem{prop}[thm]{Proposition}
\newtheorem{cor}[thm]{Corollary}
\newtheorem{ex}[thm]{Example}
\theoremstyle{remark}
\newtheorem{oss}[thm]{Remark}
\DeclareMathOperator{\id}{\mathrm{Id}}
\DeclareMathOperator{\End}{\mathrm{End}}
\DeclareMathOperator{\imm}{\mathrm{Im}}
\DeclareMathOperator{\sm}{\mathrm{sm}}
\DeclareMathOperator{\Or}{\mathcal{OR}}
\DeclareMathOperator{\N}{\mathbb{N}}
\DeclareMathOperator{\modue}{(\mathrm{mod}~2)}
\begin{document}

\begin{center}

{\Large \bf }

\vspace{0.5cm}

{\Large \bf Special idempotents and projections}

 \vspace{0.5cm}

 Paolo Sentinelli \\

$\,$ \\

{\em paolosentinelli@gmail.com } \\

\end{center}

\begin{abstract}
We define, for any special matching of a finite graded poset, an idempotent, regressive and order preserving function. We consider the monoid generated by such functions. We call \emph{special idempotent} any idempotent element of this monoid.
They are interval retracts. Some of them realize a kind of parabolic map and are called \emph{special projections}. We prove that, in Eulerian posets, the image of a special projection, and its complement, are graded induced subposets. In a finite Coxeter group, all projections on right and left parabolic quotients are special projections,
and some projections on double quotients too. We extend our results to special partial matchings.
\end{abstract}

\section{Introduction}

The notion of special matching of a partially ordered set has been introduced by F. Brenti in \cite{brenti-cohomology};
he proved that, in any symmetric group, the Kazhdan-Lusztig polynomial $P_{e,v}$ depends only on the isomorphism type
of the Bruhat interval $[e,v]$. Here $e$ denotes the identity of the group. The proof lies on the fact that these polynomials can be computed using only
special matchings of the Bruhat intervals. This result has been extended to all Coxeter groups by
Brenti, Caselli and Marietti in \cite{bcm}. In this note, for any special matching $M$ of a finite poset $K$, we define an idempotent function
$P^M : K \rightarrow K$, and, by composition, we consider the monoid generated by the set of such idempotents. It turns out that this is a submonoid
of the monoid of regressive order preserving functions $\Or(K)$ (Proposition \ref{morfismo}). We call \emph{special idempotents}
the idempotents of this submonoid. Any special idempotent gives a Galois connection, since this is true for any idempotent of $\Or(K)$ (Proposition \ref{proposizione galois});
moreover, extending a definition in \cite{forcey sottile}, we prove that special idempotents are \emph{interval retracts} (Proposition \ref{prop interv}) and they realize a partition of the poset $K$ into intervals (Corollary \ref{corollario partizione}). In the context of Coxeter groups we recover the partition into left, right or double cosets of parabolic subgroups.
In fact, in a finite Coxeter group $W$ with the Bruhat order,  all projections on quotients $W^J$, $^JW$ and double quotients ${^IW^J}$ are special
idempotents, obtained by composing the idempotents corresponding to multiplication matchings (see Section \ref{ultima sezione}).

The notion of \emph{projection} (Definition \ref{def proiezione}) is introduced in order to obtain a simple formula,
known for parabolic quotients of Coxeter groups, of the M\"obius function of the image of any idempotent regressive order preserving function which satisfies a maximum condition, when the poset is Eulerian (Corollary \ref{cor moebius}). We call \emph{special projections} the special idempotents which are projections.  Our main results are that, in Eulerian posets, the
image  of a special projection is a graded poset (Theorem \ref{immagine graduata}) and that
its complement is a graded poset (Theorem \ref{complemento graduato}), as induced subposets.
These theorems generalize, in the finite cases, well known results in Coxeter groups and a theorem of \cite{sentinelli comp}.
In fact, these last ones can be deduced by our theorems, once is proved the existence of a parabolic map, as Billey, Fan and Losonczy introduced in  \cite{billey},
and a coset extension of it (see Proposition \ref{prop parabolic map}),
or, following our terminology, once is proved that the projections on quotients $W^J$ and $^JW$ are special projections (Corollary \ref{teorema proiezioni}).

These results, together with Proposition \ref{moebius proiezioni}, allow us to deduce the M\"obius function of the posets $W^J$ and $^JW$ once is known the M\"obius function of $W$ with the Bruhat order. For double quotients ${^IW^J}$ the situation is different in general. It is known that these posets could be not graded.
For graded double quotients, Examples \ref{esempio non proiezione graduato} and \ref{esempi proiezioni Sn} show that the canonical projections on them could be not special projections;
the same is true if we consider the special idempotent $\hat{P}^J: \mathrm{Invol}(W) \rightarrow {^JW^J} \cap \mathrm{Invol}(W)$ defined in Section \ref{sezione involuzioni}, where $\mathrm{Invol}(W)$
is the set of involutions of the group $W$ (see Example \ref{esempi proiezioni involuzioni Sn}).

We devote the last section to extend our results to special partial matchings, which have been introduced by A. Hultman in \cite{Hultman1}
in order to prove a combinatorial invariance result in the spirit of Brenti's work cited above.

\section{Notation and preliminaries}

In this section we establish some notation and we collect some basic definitions and
results from the theory of finite posets
which will be useful in the sequel. We let $\mathbb{Z}$ be the set of integer numbers and $\mathbb{N}$ the set of non-negative
integers; for any $n\in \mathbb{N}$ we use the notation $[n]:=\{1,2,\ldots,n\}$; in
particular $[0]=\varnothing$. With $\biguplus$ we denote the disjoint union and with $|X|$ the cardinality of a set $X$. A function $\id_X : X \rightarrow X$ is defined by $\id_X(x)=x$, for all $x\in X$. For any function $f:X\rightarrow Y$ and any $y\in Y$ we define a subset of $X$ by
$f_y:=\{x \in X:f(x)=y\}$ and the image of $f$ by $\imm(f):=\{f(x):x\in X\}$.

Given a partially ordered set (poset) $K$, any pair $(x,y)\in K \times K$ satisfying
$x\leqslant y$ defines an \emph{interval}  $[x,y]:=\{z\in K:x\leqslant z \leqslant y\}$; when $|[x,y]|=2$ we say that $y$ \emph{covers} $x$ and we write $x \vartriangleleft y$.
In this article we consider a finite poset $K$ to be \emph{graded} if it has minimum, maximum and a rank function $\rho : K \rightarrow \N$.
We set $\rho(x,y):=\rho(y)-\rho(x)$, for all $x,y\in K$. We denote by $\hat{0}$ and $\hat{1}$ the minimum and the maximum of $K$, respectively.
We let $\mathrm{atom}(K):=\{x \in K: \hat{0} \vartriangleleft x \}$ be the set of \emph{atoms} of $K$.

The M\"obius function of a finite poset $K$ is the function $\mu_K: K\times K \rightarrow \mathbb{Z}$ defined by

$$\mu_K(x,y)=\left\{
               \begin{array}{ll}
                 1, & \hbox{if $x=y$;} \\
                 -\sum_{x \leqslant z < y}\mu_K(x,z), & \hbox{if $x<y$;} \\
                 0, & \hbox{otherwise,}
               \end{array}
             \right.$$
for all $(x,y)\in K\times K$. A graded poset $K$ with rank function $\rho$ is said to be \emph{Eulerian} if $\mu_K(x,y)=(-1)^{\rho(x,y)}$,
for all $x,y\in K$ such that $x\leqslant y$.

Given two posets $H$ and $K$, a \emph{Galois connection between $H$ and $K$} is a pair $(f,g)$ of order preserving functions $f: H \rightarrow K$
and $g: K \rightarrow H$ such that $x \leqslant g(y)$ if and only if $f(x) \leqslant y$, for all $x\in H$, $y\in K$.
Given a Galois connection $(f,g)$ between $H$ and $K$, the M\"obius functions of the two posets satisfy the following equality (see e.g. \cite{aguiar} and references therein):
\begin{equation} \label{rota}
\sum \limits_{y \in f_v}\mu_H(x,y)=\sum \limits_{u \in g_x}\mu_K(u,v),\end{equation}
for all $x\in H$, $v\in K$.

\section{The monoid of regressive poset endomorfisms}

Let $K$ be a finite poset. The category of finite posets considered here is the one whose morphisms are the order preserving functions.
A function $f\in \End(K)$ is said to be \emph{regressive} if $f(x) \leqslant x$, for all $x\in K$.
The set $\Or(K):=\{f\in \End(K): \mbox{$f$ is regressive}\}$
is a finite monoid, with composition of functions as operation and $\id_K$ as identity. The monoid of regressive order preserving functions of a finite poset has been considered in the literature; it is an example of $J$-trivial monoid.
See \cite{denton1} and references therein. From here to the end we write composition of functions by juxtaposition or using the symbol $\circ$.
The proof of the following lemma is straightforward.
\begin{lem} \label{prop punti fissi}
  Let $f\in \Or(K)$, $f=f_1\cdots f_k$ for some $f_1,\ldots,f_k \in \Or(K)$, and $v\in K$. Then $f(v)=v$ if and only if $f_i(v)=v$ for all $i \in [k]$.
\end{lem}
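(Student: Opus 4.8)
The plan is to prove the two implications separately, noting that all the substance sits in the forward direction and comes from regressivity alone.

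For the ``if'' direction I would argue that it is purely formal: assuming $f_i(v)=v$ for all $i\in[k]$, I read the composition from the inside out, so $f_k(v)=v$ forces $f_{k-1}(f_k(v))=f_{k-1}(v)=v$, and a one-line induction gives $f(v)=f_1(f_2(\cdots f_k(v)\cdots))=v$. This step invokes neither regressivity nor order preservation.

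For the ``only if'' direction, I would introduce the intermediate values of the composition: set $w_k:=f_k(v)$ and $w_i:=f_i(w_{i+1})$ for $i\in[k-1]$, together with the convention $w_{k+1}:=v$, so that $w_1=f(v)$. Regressivity of each $f_j$ gives $w_i=f_i(w_{i+1})\leqslant w_{i+1}$ for every $i\in[k]$, producing the chain
$$f(v)=w_1\leqslant w_2\leqslant\cdots\leqslant w_k\leqslant w_{k+1}=v.$$
The hypothesis $f(v)=v$ then squeezes this chain, forcing every inequality to be an equality, hence $w_1=\cdots=w_{k+1}=v$. Reading off the definitions, $f_i(v)=f_i(w_{i+1})=w_i=v$ for each $i\in[k]$, which is exactly the claim.

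There is no real obstacle here; the only point needing care is bookkeeping, namely fixing the convention that $f_1\cdots f_k$ means $f_1\circ\cdots\circ f_k$ and ordering the auxiliary values $w_i$ so that the regressive inequalities telescope into a single chain bounded above by $v$. Once that chain is in place the conclusion is immediate, and it is worth remarking that the argument uses only regressivity of the factors, not their order preservation.
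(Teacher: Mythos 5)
Your proof is correct; the telescoping chain $f(v)=w_1\leqslant w_2\leqslant\cdots\leqslant w_{k+1}=v$ obtained from regressivity, collapsed by antisymmetry, is exactly the ``straightforward'' argument the paper has in mind (it omits the proof entirely). Your observation that only regressivity, and not order preservation, is used is accurate.
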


\begin{cor}
  Let $f\in \Or(K)$ be idempotent, $f=f_1\cdots f_k$ for some idempotent functions $f_1,\ldots,f_k \in \Or(K)$. Then
$$ \imm(f) = \bigcap \limits_{i=1}^k \imm(f_i).$$
\end{cor}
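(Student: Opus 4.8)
The plan is to convert the statement about images into a statement about fixed points, and then apply Lemma \ref{prop punti fissi} directly. The main observation I would establish first is elementary: for any idempotent $g\in\Or(K)$, the image $\imm(g)$ coincides with the set of fixed points $\{v\in K:g(v)=v\}$. Indeed, every fixed point lies in the image by definition, and conversely, if $v=g(w)$ for some $w\in K$, then idempotency gives $g(v)=g(g(w))=g(w)=v$, so $v$ is fixed. This holds for $f$ itself (which is assumed idempotent) and for each $f_i$ (assumed idempotent as well).

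With this reduction in hand, the chain of equivalences is immediate. For $v\in K$ I would argue
$$v\in\imm(f) \iff f(v)=v \iff f_i(v)=v \text{ for all } i\in[k] \iff v\in\bigcap_{i=1}^k\imm(f_i),$$
where the first and last equivalences use the image/fixed-point identification applied to $f$ and to each $f_i$ respectively, and the crucial middle equivalence is exactly Lemma \ref{prop punti fissi} applied to the factorization $f=f_1\cdots f_k$. Since $v$ was arbitrary, the two sets coincide, which is the claim.

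I do not anticipate a genuine obstacle here, since the statement is a direct corollary of the preceding lemma; the only point requiring care is to check that the hypotheses of Lemma \ref{prop punti fissi} are met, namely that $f$ and all the $f_i$ belong to $\Or(K)$ and that the factorization $f=f_1\cdots f_k$ is the one used, and to make explicit that idempotency of each factor (and not merely of $f$) is what licenses replacing each fixed-point set $\{v:f_i(v)=v\}$ by the image $\imm(f_i)$. Everything else is the routine unwinding of definitions displayed above.
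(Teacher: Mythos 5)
Your proof is correct and is exactly the intended argument: the paper states this as an immediate corollary of Lemma \ref{prop punti fissi} without writing out a proof, and the identification of $\imm(g)$ with the fixed-point set of an idempotent $g$ followed by the lemma is precisely the routine unwinding that is being left to the reader. Nothing to add.
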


The following two results are known. We give proofs, in our setting, for sake of completeness (see, e.g., \cite[Section 3]{denton1}).

\begin{prop} \label{prop assorbimento}
  Let $f\in \Or(K)$, $f=f_1\cdots f_k$  for some $f_1,\ldots,f_k \in \Or(K)$. Then the following are equivalent:
\begin{enumerate}
  \item \label{a} $f$ is idempotent;
  \item \label{b} $f_if=f$, for all $i \in [k]$;
  \item \label{c} $ff_i=f$, for all $i \in [k]$.
\end{enumerate}
\end{prop}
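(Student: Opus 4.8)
The plan is to prove the two implications \emph{into} $(1)$ by a direct telescoping, and the two implications \emph{out of} $(1)$ by combining the fixed-point Lemma~\ref{prop punti fissi} with a pointwise monotonicity bound. Throughout I write $g\leq h$ for two functions to mean $g(x)\leq h(x)$ for every $x\in K$. I would first dispatch the formal directions $(2)\Rightarrow(1)$ and $(3)\Rightarrow(1)$. Assuming $(2)$, I write $f^2=f_1\cdots f_k\,f$ and absorb the factors one at a time from the right: using $f_k f=f$, then $f_{k-1}f=f$, and so on, the composition collapses to $f^2=f_1 f=f$. Assuming $(3)$, I symmetrically absorb from the left, $f^2=f\,f_1 f_2\cdots f_k=(f f_1)f_2\cdots f_k=\cdots=f f_k=f$. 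Each absorption is justified by associativity together with one instance of the hypothesis.

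For $(1)\Rightarrow(2)$ I would invoke Lemma~\ref{prop punti fissi} directly. If $f$ is idempotent, then for every $x\in K$ the element $v:=f(x)$ satisfies $f(v)=f^2(x)=f(x)=v$, so $v$ is a fixed point of $f=f_1\cdots f_k$. The Lemma then forces $f_i(v)=v$ for all $i\in[k]$, i.e. $f_i\bigl(f(x)\bigr)=f(x)$; as $x$ was arbitrary, $f_i f=f$ for all $i$, which is $(2)$.

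The hard part will be $(1)\Rightarrow(3)$, and its content is an auxiliary pointwise inequality that holds with no idempotence hypothesis at all. Introducing the tails $g_i:=f_i f_{i+1}\cdots f_k$, with the convention $g_{k+1}=\id_K$, one has $f=g_1$ and $g_i=f_i\circ g_{i+1}$. Regressiveness of each $f_j$ gives the pointwise chain $f=g_1\leq g_2\leq\cdots\leq g_{k+1}=\id_K$; moreover, since $g_{i+1}(x)\leq x$ and $f_i$ is order preserving, $g_i(x)=f_i\bigl(g_{i+1}(x)\bigr)\leq f_i(x)$. Combining the two observations yields the key bound $f(x)\leq g_i(x)\leq f_i(x)$ for all $x\in K$ and all $i\in[k]$, that is, $f\leq f_i$ pointwise. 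With this in hand the squeeze is immediate: fixing $x$ and $i$, the inequality $f_i(x)\leq x$ and monotonicity of $f$ give $f\bigl(f_i(x)\bigr)\leq f(x)$, while $f(x)\leq f_i(x)$ and monotonicity give $f^2(x)\leq f\bigl(f_i(x)\bigr)$, which by idempotence reads $f(x)\leq f\bigl(f_i(x)\bigr)$; hence $f\bigl(f_i(x)\bigr)=f(x)$ and $f f_i=f$.

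The obstacle is concentrated precisely in $(1)\Rightarrow(3)$. Unlike $(2)$, which follows at once from Lemma~\ref{prop punti fissi} because $f(x)$ is automatically an $f$-fixed point, the element $f_i(x)$ need not be fixed by $f$, so the Lemma cannot be applied on the right; the real work is establishing the monotone bound $f\leq f_i$ through the regressive tails $g_i$. Once that bound is secured, both nontrivial directions reduce to the same two-sided squeeze, using only monotonicity, regressiveness, and idempotence.
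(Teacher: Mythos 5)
Your proof is correct; every implication checks out. The overall skeleton coincides with the paper's in two places: you both get $(1)\Rightarrow(2)$ by applying Lemma~\ref{prop punti fissi} to the fixed point $f(x)$ of $f$, and you both dispose of the implications into $(1)$ by telescoping absorption (the paper compresses these into ``obvious''/``Then $f$ is idempotent''). Where you genuinely diverge is in how the remaining absorption $ff_i=f$ is obtained. The paper proves the cycle $(a)\Rightarrow(b)\Rightarrow(c)\Rightarrow(a)$ and, for $(b)\Rightarrow(c)$, squeezes via the identity $ff_if=f$ (available because $f_if=f$ and $f^2=f$), writing $f(v)=(ff_if)(v)\leqslant(ff_i)(v)\leqslant f(v)$; the lower bound comes from feeding $f(v)\leqslant v$ into the order-preserving map $ff_i$. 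You instead prove $(1)\Rightarrow(3)$ directly, and your lower bound rests on the unconditional pointwise inequality $f\leqslant f_i$, extracted from the tails $g_i=f_i\cdots f_k$ using only regressiveness and monotonicity. That auxiliary inequality is a nice standalone fact about any product in $\Or(K)$ (no idempotence needed) and makes your argument slightly more self-contained and reusable; the paper's version is shorter because it borrows hypothesis $(b)$ to manufacture the middle term $ff_if$ for free. Both squeezes are the same in spirit, so the trade-off is modest: your route costs one extra lemma but proves each implication out of $(1)$ without first passing through $(2)$.
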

\begin{proof}
  The implication \eqref{a} $\Rightarrow$ \eqref{b} follows directly by Lemma \ref{prop punti fissi} and the implication
 \eqref{c} $\Rightarrow$ \eqref{a} is obvious.
Assume $f_if=f$, for all $i \in [k]$. Then $f$ is idempotent. By hypothesis $f(v)\leqslant v$, $f_i(v) \leqslant v$, for all $v\in K$, $i\in [k]$, and these inequalities imply $f(v)=(ff_if)(v) \leqslant (ff_i)(v) \leqslant f(v)$, i.e. $(ff_i)(v)=f(v)$, for all $v\in K$.
\end{proof}

Let $\mathrm{E}(M)$ be the set of idempotents of a monoid $M$.
Such a set is partially ordered by letting $P \leqslant Q$ if and only if $PQ=QP=Q$, for all $P,Q\in E(M)$.

\begin{cor}
  The poset of idempotents $E(\Or(K))$ is a lattice.
\end{cor}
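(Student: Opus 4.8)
The plan is to prove that $E(\Or(K))$ has a least element and that every pair of idempotents has a join; since a finite join-semilattice with a minimum is automatically a lattice, this will suffice. The first step is to translate the order $P\leqslant Q$ into inclusion of images. For an idempotent $P\in E(\Or(K))$ one has $\imm(P)=\{x\in K: P(x)=x\}$, because $P(x)\leqslant x$ and $P(P(x))=P(x)$. I claim that, for $P,Q\in E(\Or(K))$, the relation $P\leqslant Q$ (that is $PQ=QP=Q$) is equivalent to $\imm(Q)\subseteq\imm(P)$. Indeed, if $PQ=Q$ then each value $Q(x)=P(Q(x))$ lies in $\imm(P)$; conversely, if $\imm(Q)\subseteq\imm(P)$ then $P$ fixes every $Q(x)$, whence $PQ=Q$, and from $Q(x)=P(Q(x))\leqslant P(x)$ together with monotonicity of $Q$ one obtains $Q(x)\leqslant Q(P(x))\leqslant Q(x)$, so also $QP=Q$. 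Thus $P\mapsto\imm(P)$ is an order-reversing injection of $E(\Or(K))$ into the subsets of $K$.

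The least element is $\id_K$, since $\id_K\,Q=Q\,\id_K=Q$ for every $Q\in E(\Or(K))$. To produce joins, fix $P,Q\in E(\Or(K))$ and consider the pointwise decreasing sequence $PQ\geqslant(PQ)^2\geqslant\cdots$ in the finite monoid $\Or(K)$, which is legitimate because $PQ$ is regressive. A pointwise decreasing chain in a finite poset stabilizes, so $R:=(PQ)^N=(PQ)^{N+1}$ for $N$ large; then $R$ is regressive, order preserving and idempotent (indeed $R^2=(PQ)^{2N}=(PQ)^N=R$), and it is a product of the idempotents $P,Q,\dots,P,Q$. By the first Corollary of this section, which computes $\imm(f)=\bigcap_i\imm(f_i)$ when an idempotent $f$ is written as a product of idempotents $f_i$, we get $\imm(R)=\imm(P)\cap\imm(Q)$.

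It remains to identify $R$ as the join. From $\imm(R)\subseteq\imm(P)$ and $\imm(R)\subseteq\imm(Q)$ the equivalence of the first paragraph gives $P\leqslant R$ and $Q\leqslant R$, so $R$ is an upper bound of $\{P,Q\}$; and if $S\in E(\Or(K))$ satisfies $P\leqslant S$ and $Q\leqslant S$, then $\imm(S)\subseteq\imm(P)\cap\imm(Q)=\imm(R)$, hence $R\leqslant S$. Therefore $R=P\vee Q$. Finally, having a minimum and all binary joins forces all binary meets: for $P,Q$ the set of their common lower bounds is finite and contains $\id_K$, and the join $m$ of this set, obtained by iterating $\vee$, is itself a lower bound of $P$ and $Q$ while dominating every lower bound, so $m=P\wedge Q$. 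Hence $E(\Or(K))$ is a lattice. The crux of the argument is the explicit join $R=(PQ)^N$ and the identity $\imm(R)=\imm(P)\cap\imm(Q)$; once these are in place via the regressive stabilization and the image-intersection corollary, everything else is formal order theory.
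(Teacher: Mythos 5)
Your proof is correct, and while it shares the paper's overall skeleton (minimum $\id_K$, construct binary joins, invoke the standard fact that a finite join-semilattice with minimum is a lattice), the way you certify the join is genuinely different. The paper works inside the submonoid $\langle P,Q\rangle$: finiteness and regressivity give some $T$ with $PT=QT=T$, Proposition \ref{prop assorbimento} makes $T$ idempotent, and the least-upper-bound property is read off from the structure of $E(\langle P,Q\rangle)=\{\id_K,P,Q,T\}$. You instead first establish the characterization $P\leqslant Q\iff \imm(Q)\subseteq\imm(P)$ (which the paper never states explicitly, though it is implicit in the surrounding results), take the explicit stabilized power $R=(PQ)^N$, and identify $\imm(R)=\imm(P)\cap\imm(Q)$ via the image-intersection corollary; the universal property of the join then becomes pure set theory. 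Your route buys a transparent description of the lattice $E(\Or(K))$ as (anti-isomorphic to) a family of subsets of $K$ closed under intersection, at the cost of proving the order-image equivalence; the paper's route is shorter on that front but leaves the least-upper-bound verification more implicit (it rests on the observation that any idempotent $U$ absorbing both $P$ and $Q$ absorbs every word in them, hence $T$). All the individual steps you use --- regressivity forcing pointwise stabilization of $(PQ)^k$, idempotence of the stable power, and the corollary $\imm(f)=\bigcap_i\imm(f_i)$ --- are sound and correctly applied.
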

\begin{proof}
  The poset $E(\Or(K))$ has minimum $\id_K$. Let $P,Q\in E(\Or(K))$ be not comparable idempotents, and $M:=\langle P ,Q \rangle \subseteq \Or(K)$ the submonoid generated by $P$ and $Q$. The finiteness of $K$ and the regressivity of the functions $P$ and $Q$ imply the existence of an element $T\in \langle P ,Q \rangle$ such that $PT=QT=T$; hence, by Proposition \ref{prop assorbimento}, $T$
is idempotent and $P,Q<T$.
If $T_1,T_2 \in E(M)\setminus \{P,Q\}$, then, again by Proposition \ref{prop assorbimento}, $P,Q \leqslant T_1$ and $P,Q \leqslant T_2$; this implies
$T_1=T_1T_2=T_2$.  We have proved that $E(M)=\{\id_K,P,Q,T\}$. If $U\in E(\Or(K))$ satisfies $P,Q<U$, then
$T \leqslant U$; so we can define $P \vee Q := T$. Since $E(\Or(K))$ has minimum, it is a lattice.
\end{proof}

Let $P\in \Or(K)$ be idempotent. If $K$ has minimum $\hat{0}$ and maximum $\hat{1}$, then the minimum of $\imm(P)$, as induced subposet of $K$, is $P(\hat{0})=\hat{0}$
and its maximum is $P(\hat{1})$. Moreover we observe that $\min P_x = \{x\}$, for all $x \in \imm(P)$.

\begin{prop} \label{proposizione galois}
  Let $P\in \Or(K)$ be idempotent. Then the poset morphisms $\id_{\imm(P)}: \imm(P) \rightarrow K$ and $P:K \rightarrow \imm(P)$ give a Galois connection
between $\imm(P)$ and $K$.
\end{prop}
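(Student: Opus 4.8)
The plan is to instantiate the definition of Galois connection with $H := \imm(P)$, taking $f := \id_{\imm(P)}$ to be the inclusion of $\imm(P)$ into $K$ and $g := P$, regarded as a map $K \rightarrow \imm(P)$. With this choice the defining statement $x \leq g(y) \iff f(x) \leq y$ becomes, for $x \in \imm(P)$ and $y \in K$, the biconditional $x \leq P(y) \iff x \leq y$, so the whole proof reduces to establishing this equivalence together with the order-preservation of the two maps.

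First I would record that both maps are order preserving. The inclusion $\id_{\imm(P)}$ is order preserving because $\imm(P)$ carries the order induced from $K$, so for $a,b \in \imm(P)$ the relation $a \leq b$ in $\imm(P)$ is literally the relation $a \leq b$ in $K$. The map $P$ is order preserving by hypothesis, since $P \in \Or(K)$; corestricting its codomain to $\imm(P)$ does not affect this, as the order on $\imm(P)$ is again the induced one.

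Then I would prove the biconditional. For the forward implication, suppose $x \leq P(y)$ with $x \in \imm(P)$; regressivity of $P$ gives $P(y) \leq y$, whence $x \leq P(y) \leq y$ and so $x \leq y$. For the converse, suppose $x \leq y$. Applying the order-preserving map $P$ yields $P(x) \leq P(y)$. Here I would invoke idempotency of $P$: every $x \in \imm(P)$ satisfies $P(x) = x$ (this is the observation, made just before the statement, that $\min P_x = \{x\}$ for $x \in \imm(P)$, since $x = P(z)$ for some $z$ forces $P(x) = P(P(z)) = P(z) = x$). Therefore $x = P(x) \leq P(y)$, which completes the equivalence.

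The argument is entirely elementary and I do not expect a genuine obstacle; the only point that requires care is bookkeeping about which poset each inequality lives in. Since $\imm(P)$ is taken as an induced subposet, the comparison $x \leq P(y)$ between elements of $\imm(P)$ coincides with the corresponding inequality in $K$, so regressivity and order-preservation --- both phrased in $K$ --- may be applied directly. The two ingredients doing all the work are thus the regressivity of $P$ in the forward direction and the fact that $P$ fixes its image pointwise by idempotency in the converse direction.
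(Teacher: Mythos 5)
Your proof is correct and follows essentially the same route as the paper's: the forward direction of the biconditional uses regressivity ($x \leq P(y) \leq y$), and the converse uses order preservation together with the fact that $P$ fixes its image pointwise. The extra bookkeeping you include about the induced order on $\imm(P)$ and the verification that $P(x)=x$ for $x\in\imm(P)$ is implicit in the paper's one-line argument but does no harm.
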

\begin{proof}
Let $x\in \imm(P)$ and $y\in K$.  Then $\id_{\imm(P)}(x) \leqslant y$ implies $x \leqslant y$ and then $x=P(x) \leqslant P(y)$, since $P$ is order preserving.
Let $x \leqslant P(y)$; then $x \leqslant P(y) \leqslant y$ by regressivity, i.e. $\id_{\imm(P)}(x) \leqslant y$.
\end{proof}

We use formula \eqref{rota} to deduce the following corollary.
\begin{cor} \label{cor moebius}
  Let $K$ be a finite poset and $P\in \Or(K)$ idempotent. Then
  $$ \mu_{\imm(P)}(x,y)=  \sum \limits_{z\in [x,y]\cap P_x } \mu_K(z,y),$$
   for all $x,y\in \imm(P)$, $x \leqslant y$.
\end{cor}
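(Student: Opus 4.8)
The plan is to specialize the Galois-connection Möbius identity \eqref{rota} to the connection furnished by Proposition \ref{proposizione galois}, and then to simplify each side. Concretely, I would take $H=\imm(P)$, $f=\id_{\imm(P)}:\imm(P)\rightarrow K$ and $g=P:K\rightarrow\imm(P)$, which form a Galois connection between $\imm(P)$ and $K$ by that proposition. Evaluating \eqref{rota} at the element $x\in\imm(P)$ (in the role of the first argument) and at $v=y\in\imm(P)\subseteq K$ (in the role of the element of $K$) yields
$$\sum_{b\in f_y}\mu_{\imm(P)}(x,b)=\sum_{u\in g_x}\mu_K(u,y).$$

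Next I would simplify the left-hand side. Since $f=\id_{\imm(P)}$ acts as the identity, the fibre $f_y=\{b\in\imm(P):b=y\}$ equals the singleton $\{y\}$, because $y\in\imm(P)$; hence the left-hand side collapses to the single term $\mu_{\imm(P)}(x,y)$, which is precisely the quantity we want to compute.

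For the right-hand side, note that $g_x=P_x$, so it reads $\sum_{u\in P_x}\mu_K(u,y)$. To match the claimed index set I would discard the vanishing terms: by the definition of the Möbius function, $\mu_K(u,y)=0$ whenever $u\not\leqslant y$, so only the $u\in P_x$ with $u\leqslant y$ contribute. Finally, regressivity of $P$ gives $x=P(u)\leqslant u$ for every $u\in P_x$, so for such $u$ the condition $u\leqslant y$ is equivalent to $u\in[x,y]$; thus the surviving terms are exactly those indexed by $[x,y]\cap P_x$, which gives the stated formula.

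The argument is essentially bookkeeping once \eqref{rota} is in hand, so I do not expect a genuine obstacle. The only points requiring care are the two simplifications that make the index sets coincide: observing that the identity map forces the left fibre $f_y$ to be a singleton, and observing that regressivity upgrades the condition ``$u\leqslant y$'' to ``$u\in[x,y]$'' on the right. I expect these two observations to be the crux of the verification.
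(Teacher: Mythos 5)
Your proof is correct and matches the paper's intent exactly: the paper gives no written proof beyond the remark that the corollary is deduced from formula \eqref{rota} applied to the Galois connection of Proposition \ref{proposizione galois}, and your derivation — collapsing the fibre of the identity to $\{y\}$ and using regressivity plus the vanishing of $\mu_K(u,y)$ for $u\nleqslant y$ to cut the index set down to $[x,y]\cap P_x$ — is precisely that deduction carried out in full.
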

The formula in  Corollary \ref{cor moebius} suggests the next definition.
\begin{dfn} \label{def proiezione}
  We say that an idempotent $P\in \Or(K)$ is a \emph{projection} if $[x,y]\cap P_x$ is an interval of $K$,
  for all $x,y\in \imm(P)$, $x\leqslant y$.
  \end{dfn}
We see in Corollary \ref{teorema proiezioni} how projections in finite Coxeter groups are related to the existence of a parabolic map, in the meaning of \cite{billey}. For a projection of an Eulerian poset with rank function $\rho$, the formula of Corollary \ref{cor moebius} simplifies.

\begin{prop} \label{moebius proiezioni}
Let $K$ be Eulerian and $P\in \Or(K)$ be a projection. Then
  $$\mu_{\imm(P)}(x,y)=\left\{
                         \begin{array}{ll}
                           (-1)^{\rho(x,y)}, & \hbox{if $[x,y] \subseteq \imm(P)$;} \\
                           0, & \hbox{otherwise,}
                         \end{array}
                       \right.
$$ for all $x,y\in \imm(P)$, $x\leqslant y$.
\end{prop}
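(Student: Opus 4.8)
The plan is to separate two cases according to whether $[x,y]\subseteq\imm(P)$, computing $\mu_{\imm(P)}(x,y)$ from Corollary \ref{cor moebius} together with the Eulerian hypothesis, so that everything reduces to a single combinatorial claim about the fibres of $P$.

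If $[x,y]\subseteq\imm(P)$, then every $z\in[x,y]$ is fixed by $P$, so $[x,y]\cap P_x=\{x\}$ and the sum in Corollary \ref{cor moebius} collapses to its single term $z=x$, giving $\mu_{\imm(P)}(x,y)=\mu_K(x,y)=(-1)^{\rho(x,y)}$ by the Eulerian hypothesis. If instead $[x,y]\not\subseteq\imm(P)$, I would use that $P$ is a projection (Definition \ref{def proiezione}): the set $[x,y]\cap P_x$ is then an interval of $K$ whose minimum is $x$ (since $P(x)=x$ and $\min P_x=\{x\}$), say $[x,y]\cap P_x=[x,b]$. Writing $\rho(z,y)=\rho(z,b)+\rho(b,y)$ and $\rho(z,b)=\rho(x,b)-\rho(x,z)$, Corollary \ref{cor moebius} yields
$$\mu_{\imm(P)}(x,y)=\sum_{z\in[x,b]}(-1)^{\rho(z,y)}=(-1)^{\rho(b,y)+\rho(x,b)}\sum_{x\leqslant z\leqslant b}(-1)^{\rho(x,z)},$$
and the inner sum equals $\sum_{x\leqslant z\leqslant b}\mu_K(x,z)$ by the Eulerian hypothesis, which vanishes whenever $x\neq b$ by the defining recursion of $\mu$. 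Thus this case is settled once I know that $b\neq x$.

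The heart of the argument, and the step I expect to be the main obstacle, is therefore the claim that $[x,y]\cap P_x=\{x\}$ forces $[x,y]\subseteq\imm(P)$ (so that, in the second case, necessarily $b>x$). I would prove this by contradiction, choosing $z$ minimal in $[x,y]$ with $P(z)<z$ and putting $w:=P(z)\in\imm(P)$. Regressivity and order preservation give $x\leqslant w<z\leqslant y$, while $w=x$ is excluded because it would place the element $z>x$ in $[x,y]\cap P_x$. Applying the projection property to the pair $w,y\in\imm(P)$, the set $[w,y]\cap P_w$ is an interval of $K$ containing both $w$ and $z$, hence the whole interval $[w,z]$; so every element of $[w,z]$ is sent to $w$, and minimality of $z$ then forces $z$ to cover $w$. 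Finally I would choose $p$ with $x\leqslant p\vartriangleleft w$ and examine the rank-two interval $[p,z]$: since $K$ is Eulerian this interval is a diamond, so besides $w$ it contains a second atom $w'$ with $p\vartriangleleft w'\vartriangleleft z$ and $w'\neq w$. Minimality makes $w'$ a fixed point, whence $w'=P(w')\leqslant P(z)=w$; as $\rho(w')=\rho(w)$ this gives $w'=w$, the desired contradiction. The two delicate points are the reduction to the covering case through Definition \ref{def proiezione} and the exploitation of the Eulerian diamond property to manufacture $w'$.
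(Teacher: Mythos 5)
Your proposal is correct and follows essentially the same route as the paper: both reduce, via Corollary \ref{cor moebius} and the interval structure of $[x,y]\cap P_x$, to showing that the alternating sum vanishes unless $[x,y]\cap P_x=\{x\}$, and that this condition is equivalent to $[x,y]\subseteq\imm(P)$, proved by taking a minimal $z\in[x,y]\setminus\imm(P)$ and deriving a rank contradiction. The only difference is cosmetic: the paper directly picks a second coatom of $[x,z]$ distinct from $P(z)$ (an Eulerian interval of rank at least two has at least two coatoms), whereas you first show $P(z)\vartriangleleft z$ and then invoke the diamond property of the rank-two interval $[p,z]$.
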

\begin{proof}
  If $P\in \Or(K)$ is a projection and $x,y\in \imm(P)$, $x < y$, then there exists $w\in K$ such that
$[x,y]\cap P_x =[x,w]$. So, by Corollary \ref{cor moebius},
\begin{eqnarray*}
  \mu_{\imm(P)}(x,y) &=& \sum \limits_{z\in [x,w] } (-1)^{\rho(z,y)} \\
  &=& (-1)^{\rho(w,y)}\sum \limits_{z\in [x,w] } (-1)^{\rho(z,w)} \\
&=&  \left\{
      \begin{array}{ll}
        (-1)^{\rho(x,y)}, & \hbox{if $w=x$;} \\
        0, & \hbox{if $w\neq x$.}
      \end{array}
    \right.
\end{eqnarray*}
Note that $[x,y] \subseteq \imm(P)$ implies $w=x$.
We claim that the converse holds too, and this proves the stated formula.
Assume $[x,y]\setminus \imm(P) \neq \varnothing$ and $w=x$.
Let $z\in \min ([x,y] \setminus \imm(P))$. Then $x < P(z) < z$. Since $K$ is Eulerian,
there exists $u\in [x,y]$ such that $x < u \vartriangleleft z$ and $u\neq P(z)$. Hence $u=P(u)< P(z)$, a contradiction.
\end{proof}

A projection of an Eulerian poset has the following property. We let, for any $A \subseteq \mathrm{atom}(K)$,
$$K_A:=\{x \in K: a  \leqslant x \Rightarrow a \in A, \, \, \forall \, a \in \mathrm{atom}(K)\}.$$
Notice that $K_A$ is an order ideal of $K$, and that $K_{\varnothing}=\{\hat{0}\}$. Moreover, if $P \in \Or(K)$ and $A:=P_{\hat{0}} \cap \mathrm{atom}(K)$  then
$P_{\hat{0}} \subseteq K_A$. In Eulerian posets the opposite inclusion holds, whenever $P$ is a projection.
\begin{prop}\label{proposizione atomi}
  Let $K$ be Eulerian, $P\in \Or(K)$ a projection and $A:=P_{\hat{0}} \cap \mathrm{atom}(K)$. Then $P_{\hat{0}}=K_A$.
\end{prop}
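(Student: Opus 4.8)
The inclusion $P_{\hat{0}} \subseteq K_A$ is already recorded in the paragraph preceding the statement, so the entire content is the reverse inclusion $K_A \subseteq P_{\hat{0}}$, and this is where the Eulerian hypothesis and the projection assumption must enter. My plan is to first reduce the claim to the single assertion that $\imm(P) \cap K_A = \{\hat{0}\}$. Indeed, for $x \in K_A$ regressivity gives $P(x) \leqslant x$, and since $K_A$ is an order ideal this forces $P(x) \in K_A$; as $P(x) \in \imm(P)$ as well, knowing $\imm(P) \cap K_A = \{\hat{0}\}$ would yield $P(x) = \hat{0}$, that is, $x \in P_{\hat{0}}$. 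So everything comes down to showing that $\hat{0}$ is the only element of the image of $P$ lying in $K_A$.

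To prove this I would argue by contradiction, choosing a counterexample $v \in \imm(P) \cap K_A$ with $v > \hat{0}$ of minimal rank. A quick check rules out $\rho(v)=1$: an atom in $K_A$ lies in $A = P_{\hat{0}} \cap \mathrm{atom}(K)$, hence is sent to $\hat{0}$ and cannot be a fixed point of $P$; thus $\rho(v) \geqslant 2$. Since $\hat{0}, v \in \imm(P)$ and $\hat{0} \leqslant v$, the projection hypothesis (Definition \ref{def proiezione}) tells us that $[\hat{0}, v] \cap P_{\hat{0}}$ is an interval of $K$; it contains $\hat{0}$ but not $v$ (because $P(v)=v>\hat{0}$), so it is of the form $[\hat{0}, w]$ with $w < v$. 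Observe also that every atom $a \leqslant v$ lies in $A \subseteq P_{\hat{0}}$, hence in $[\hat{0},w]$, so all atoms below $v$ are $\leqslant w$.

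The decisive step, and the one where the Eulerian hypothesis is essential, is to produce a coatom $c$ of the interval $[\hat{0}, v]$ with $c \not\leqslant w$. Here I would use that $[\hat{0},v]$ is itself Eulerian of rank at least $2$ and therefore has at least two coatoms: if every coatom of $[\hat{0},v]$ were $\leqslant w$, then, since a coatom $c \leqslant w < v$ together with $c \vartriangleleft v$ forces $c=w$, the element $v$ would possess a unique coatom, which is impossible in an Eulerian interval of rank $\geqslant 2$ (such an interval would have vanishing M\"obius value). For a coatom $c \not\leqslant w$ one gets $c \notin [\hat{0},v]\cap P_{\hat{0}}$, hence $P(c) > \hat{0}$; and since $P(c) \leqslant c < v$ with $P(c) \in \imm(P)$ and $P(c) \in K_A$ (again because $K_A$ is an order ideal containing $v$), the element $P(c)$ is a strictly smaller counterexample, contradicting minimality. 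This contradiction gives $\imm(P) \cap K_A = \{\hat{0}\}$ and hence, via the reduction above, the proposition.

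I expect the main obstacle to be precisely this coatom extraction: one must be certain that an Eulerian interval of rank at least $2$ cannot have a single coatom, which is the same thinness property already invoked in the proof of Proposition \ref{moebius proiezioni}. Everything else is bookkeeping with the order ideal $K_A$ together with the regressivity and idempotence of $P$, so I would keep those verifications brief and concentrate the argument on the rank-minimal counterexample and the use of the projection interval $[\hat{0},w]$.
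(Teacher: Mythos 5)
Your argument is correct and rests on exactly the same ingredients as the paper's proof: a minimal counterexample, the Eulerian thinness fact that an interval of rank $\geqslant 2$ cannot have a unique coatom, and the projection hypothesis applied to $[\hat{0},v]\cap P_{\hat{0}}$. The paper arranges them in the contrapositive order---its minimal $y\in K_A\setminus P_{\hat{0}}$ is shown to be a fixed point of $P$ whose two coatoms both lie in $P_{\hat{0}}$ by minimality, so that $[\hat{0},y]\cap P_{\hat{0}}$ has two maximal elements and cannot be an interval---but up to this bookkeeping it is the same proof.
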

\begin{proof}
We prove that $K_A \subseteq P_{\hat{0}}$. Let $y\in \min(K_A \setminus P_{\hat{0}})$. Hence $\rho(y)>1$. 
        If $P(y)<y$ then $P(y)\in K_A$ and $P(y)=\hat{0}$ by the minimality of $y$. Then $y\in P_{\hat{0}}$, a contradiction. Let $P(y)=y$.
        Then there exist $w_1, w_2 \vartriangleleft y$, $w_1 \neq w_2$, since $K$ is Eulerian and $\rho(y)>1$. Moreover $w_1, w_2 \in K_A \cap P_{\hat{0}}$ and then
$\max([\hat{0},y]\cap P_{\hat{0}})\supseteq \{w_1,w_2\}$, a contradiction. Therefore $y \in P_{\hat{0}}$.
\end{proof}


We end this section with a generalization, in the finite case, of \cite[Lemma 5.1]{sentinelli comp}.
\begin{lem} \label{lemma complemento}
  Let $P\in \Or(K)$ be idempotent and $u,v\in K\setminus \imm(P)$ such that $u\leqslant v$. Then  $[u, v]\subseteq  K\setminus \imm(P)$ if and only if
$u \nless P(v)$.
\end{lem}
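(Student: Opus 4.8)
The key preliminary observation I would record is that, since $P$ is idempotent, $\imm(P)$ coincides with the set of fixed points of $P$; hence, by regressivity, for any $z\in K$ we have $z\in K\setminus\imm(P)$ if and only if $P(z)<z$. The plan is then to prove the two implications separately, in both cases using the element $P(v)$ as a pivot: it lies in $\imm(P)$ and satisfies $P(v)\leq v$ by regressivity (indeed $P(v)<v$, since $v\notin\imm(P)$).

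For the \emph{only if} direction I would argue by contraposition. Assuming $u<P(v)$, together with $P(v)\leq v$ this gives $u\leq P(v)\leq v$, so $P(v)\in[u,v]$; but $P(v)\in\imm(P)$, whence $[u,v]\not\subseteq K\setminus\imm(P)$. This settles $[u,v]\subseteq K\setminus\imm(P)\Rightarrow u\nless P(v)$, and it is the easy half.

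The \emph{if} direction requires slightly more care. Suppose $u\nless P(v)$ and, for a contradiction, that some $z\in[u,v]$ lies in $\imm(P)$, i.e.\ $P(z)=z$. From $z\leq v$ and the fact that $P$ is order preserving I get $z=P(z)\leq P(v)$, and combining with $u\leq z$ yields $u\leq P(v)$. Now $u\in K\setminus\imm(P)$ while $P(v)\in\imm(P)$, so $u\neq P(v)$; therefore $u<P(v)$, contradicting the hypothesis. Hence no element of $[u,v]$ is fixed by $P$, that is $[u,v]\subseteq K\setminus\imm(P)$.

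The only genuinely delicate point is the last inference of the \emph{if} direction: one must notice that the weak inequality $u\leq P(v)$ is in fact strict, and this is forced precisely because $u$ is not a fixed point of $P$ whereas $P(v)$ is. Once $P(v)$ is identified as the natural pivot---the fixed point that order preservation pushes above every fixed point of the interval lying below $v$---both directions reduce to this elementary fixed-point-versus-non-fixed-point distinction, so I do not expect any serious obstacle beyond keeping track of which endpoints are assumed to lie outside $\imm(P)$.
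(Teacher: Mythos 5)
Your proof is correct and follows essentially the same route as the paper: the easy direction notes that $u<P(v)$ puts $P(v)\in[u,v]\cap\imm(P)$, and the other direction takes $w\in[u,v]\cap\imm(P)$, uses $w=P(w)\leqslant P(v)$ by order preservation, and upgrades $u\leqslant P(v)$ to $u<P(v)$ via the fixed-point distinction. No substantive difference from the paper's argument.
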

\begin{proof}
  If $u<P(v)$, then $P(v)\in [u,v]$ but $P(v)\not\in K\setminus \imm(P)$. Let $w \in [u, v]\cap \imm(P) \neq \varnothing$; then
  $w=P(w) \leqslant P(v)$. Therefore $u < w \leqslant P(v)$.
\end{proof}

\section{Special idempotents and special projections} \label{sezione special}

From here to the end $K$ is a finite graded poset with minimum $\hat{0}$, maximum $\hat{1}$
and rank function $\rho$. A function $M: K \rightarrow K$ is a \emph{matching}\footnote{In graph theory this corresponds to a \emph{perfect matching} of the Hasse diagram.} if
\begin{enumerate}
  \item $M \circ M = \id_K$;
  \item $M(x) \vartriangleleft x$ or $x \vartriangleleft M(x)$, for all $x\in K$.
\end{enumerate}
A matching $M: K \rightarrow K$ is a \emph{special matching} if  $M(x) \leqslant M(y)$ whenever $x \vartriangleleft y$ and $x \neq M(y)$, for all $x,y\in K$.
We refer to \cite{bcm}, \cite{bcm2}, \cite{bcm3}, \cite{bcm4}, \cite{caselli marietti} and \cite{caselli marietti2} for motivations and further deepening.
We let $\mathrm{m}(K)$ be the set of matchings of $K$ and $\sm(K)$ the one of special matchings.

\begin{oss}
 We observe that a matching $M\in \mathrm{m}(K)$ is special if and only if satisfies the \emph{lifting property}, namely:
  let $u,v\in K$ be such that $u \leqslant v$, $u \vartriangleleft M(u)$ and $M(v) \vartriangleleft v$. Then $M(u) \leqslant v$ and $u \leqslant M(v)$
  (see \cite[Proposition~2.2.7]{BB} for the corresponding property in Coxeter groups). One implication is the content of \cite[Lemma 4.2]{brenti-cohomology};
the other implication is easy to be deduced.
\end{oss}

We resume the previous remark in the following proposition.
\begin{prop}
  Let $M \in \mathrm{m}(K)$. Then $M \in \sm(K)$ if and only if $M$ satisfies the lifting property.
\end{prop}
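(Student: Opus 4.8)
The plan is to prove the two implications separately, using the shorthand that $M$ \emph{raises} $x$ when $x \vartriangleleft M(x)$ and \emph{lowers} $x$ when $M(x) \vartriangleleft x$; since $M$ is an involution, $M$ raises $x$ if and only if $M$ lowers $M(x)$. The implication from the lifting property to speciality is the short one and is handled by a direct case analysis; the converse is the substantive one (it is \cite[Lemma 4.2]{brenti-cohomology}), and I would prove it by induction on $\rho(u,v)$.

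For the direction lifting $\Rightarrow$ special, I would fix a cover $x \vartriangleleft y$ with $x \ne M(y)$, the goal being $M(x) \leqslant M(y)$, and split according to whether $M$ raises or lowers each of $x$ and $y$. If $M$ lowers $x$ and raises $y$, then $M(x) < x < y < M(y)$ and the claim is immediate. If $M$ lowers both, I apply the lifting property to the pair $(M(x), y)$ — legitimate since $M(x) \leqslant y$, $M$ raises $M(x)$ and $M$ lowers $y$ — and read off $M(x) \leqslant M(y)$. If $M$ raises both, I apply it instead to $(x, M(y))$, obtaining the same conclusion. The remaining case, $M$ raises $x$ and lowers $y$, cannot occur under the standing hypotheses: applying lifting to $(x,y)$ yields $x \leqslant M(y)$, and together with $M(y) \vartriangleleft y$ and $x \vartriangleleft y$ this forces $M(y) = x$, contradicting $x \ne M(y)$; so this case is vacuous.

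For the direction special $\Rightarrow$ lifting, I would fix $u \leqslant v$ with $M$ raising $u$ and lowering $v$ (so that $u \ne v$) and establish the two conclusions $u \leqslant M(v)$ and $M(u) \leqslant v$ separately, each by induction on $\rho(u,v)$. For $u \leqslant M(v)$, choose $z$ with $u \leqslant z \vartriangleleft v$: if $z = M(v)$ we are done, and otherwise the special property applied to $z \vartriangleleft v$ gives $M(z) \leqslant M(v)$, while a cover argument shows $M$ must lower $z$, since $M$ raising $z$ would give $z < M(z) \leqslant M(v) < v$, against $z \vartriangleleft v$; then $(u,z)$ satisfies the inductive hypotheses at smaller rank and yields $u \leqslant M(z) \leqslant M(v)$. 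The conclusion $M(u) \leqslant v$ is dual: choose $w$ with $u \vartriangleleft w \leqslant v$; if $w = M(u)$ we are done, otherwise $u \ne M(w)$ and the special property gives $M(u) \leqslant M(w)$, after which $M$ lowering $w$ finishes it via $M(u) \leqslant M(w) < w \leqslant v$, whereas $M$ raising $w$ lets me invoke the inductive hypothesis on $(w,v)$.

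The main obstacle, and the only place real care is needed, is the bookkeeping in the inductive step: at each stage one must check that the intermediate element $z$ (respectively $w$) is \emph{strictly} between $u$ and $v$ in rank, so that the recursion is well-founded, and that the auxiliary pair genuinely satisfies the raise/lower hypotheses needed to invoke the induction. The clean resolution is that in the base case $\rho(u,v)=1$ the special property forces $u=M(v)$ and $v=M(u)$ outright — otherwise $M(u)\leqslant M(v)$ would violate the rank constraint $\rho(M(u))=\rho(u)+1>\rho(u)=\rho(M(v))$ — so the lifting conclusions hold trivially at the base and the recursion, which strictly decreases $\rho(u,v)$ whenever it is entered, terminates.
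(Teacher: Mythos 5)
Your proof is correct. Note that the paper itself gives no argument for this proposition: it defers the direction ``special $\Rightarrow$ lifting'' to \cite[Lemma 4.2]{brenti-cohomology} and declares the converse ``easy to be deduced''; your write-up supplies complete proofs of both, and the induction on $\rho(u,v)$ that you use for the substantive direction is essentially Brenti's original argument, so there is no methodological divergence to report. The details all check out: the four-way raise/lower case split for the easy direction (with the raise-$x$/lower-$y$ case correctly shown to be vacuous), the base case $\rho(u,v)=1$ forced to $u=M(v)$ by the rank obstruction, and the verification that the auxiliary pairs $(u,z)$ and $(w,v)$ strictly decrease $\rho$ and satisfy the raise/lower hypotheses needed to recurse.
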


Given a special matching $M\in \sm(K)$, we define an idempotent function $P^M : K \rightarrow K$ by setting

$$P^M(x)= \left\{
          \begin{array}{ll}
            x, & \hbox{if $x \vartriangleleft M(x)$;} \\
            M(x), & \hbox{if $M(x)\vartriangleleft x$,}
          \end{array}
        \right.
$$ for all $x\in K$. By definition, $P^M$ is regressive. We prove that
it is also order preserving, i.e. $P^M\in \Or(K)$.


\begin{prop} \label{morfismo}
  Let $M\in \sm(K)$ and $u,v\in K$. Then  $u \leqslant v$ implies $P^M(u)\leqslant P^M(v)$.
\end{prop}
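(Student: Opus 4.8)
The plan is to reduce the statement to the case of a covering relation and then argue by cases according to whether $M$ raises or lowers each of $u$ and $v$. First I would observe that, since $K$ is finite (indeed graded), any relation $u\leqslant v$ is refined by a saturated chain $u=x_0\vartriangleleft x_1\vartriangleleft\cdots\vartriangleleft x_k=v$; by transitivity of $\leqslant$ it therefore suffices to prove $P^M(u)\leqslant P^M(v)$ under the stronger hypothesis $u\vartriangleleft v$. Call an element $x$ \emph{raised} if $x\vartriangleleft M(x)$, in which case $P^M(x)=x$, and \emph{lowered} if $M(x)\vartriangleleft x$, in which case $P^M(x)=M(x)$. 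There are four cases.

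Three of the four cases are immediate. If $u$ and $v$ are both raised, then $P^M(u)=u\vartriangleleft v=P^M(v)$. If $u$ is lowered and $v$ is raised, then $P^M(u)=M(u)<u\vartriangleleft v=P^M(v)$, so $P^M(u)<P^M(v)$. If $u$ is raised and $v$ is lowered, then $P^M(u)=u$ and $P^M(v)=M(v)$, and the lifting property applies, precisely because $u\leqslant v$, $u\vartriangleleft M(u)$ and $M(v)\vartriangleleft v$; it yields $u\leqslant M(v)$, that is $P^M(u)\leqslant P^M(v)$.

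The remaining case, in which both $u$ and $v$ are lowered, is the one that invokes the defining inequality of a special matching, and I expect it to be the crux. Here $P^M(u)=M(u)$ and $P^M(v)=M(v)$, so I must deduce $M(u)\leqslant M(v)$ from $u\vartriangleleft v$, $M(u)\vartriangleleft u$ and $M(v)\vartriangleleft v$. The special-matching condition delivers $M(u)\leqslant M(v)$ as soon as its hypothesis $u\neq M(v)$ is verified, and this verification is the one genuinely non-routine point. I would argue by contradiction: if $u=M(v)$, then applying the involution $M$ gives $M(u)=v$, contradicting $M(u)\vartriangleleft u\vartriangleleft v$. Hence $u\neq M(v)$, the special inequality applies with $x=u$ and $y=v$, and $P^M(u)=M(u)\leqslant M(v)=P^M(v)$, completing the argument.
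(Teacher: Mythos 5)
Your proof is correct and takes essentially the same approach as the paper's, which is a two-line case split on whether $M$ raises or lowers $v$, with the raised case handled by regressivity and the lowered case by the lifting property. Your additional reduction to cover relations is harmless but not needed: in the both-lowered case one can instead apply the lifting property to the pair $(M(u),v)$ --- noting that $M(u)$ is raised since $M(M(u))=u\vartriangleright M(u)$ --- to obtain $M(u)\leqslant M(v)$ directly for any $u\leqslant v$; that said, your four-case analysis, including the verification that $u\neq M(v)$ before invoking the special-matching inequality, is sound as written.
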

\begin{proof}
If $v \vartriangleleft M(v)$ the result is straightforward. If $M(v) \vartriangleleft v $ the result follows easily
by using the lifting property.
%
\end{proof}

\begin{dfn} Let $M^K \subseteq \Or(K)$ be the submonoid generated by the set of {idempotents}\footnote{An $IG$-monoid, in the meaning of \cite{howie}.} $\{\id_K\}\cup \{P^{M}:M \in \sm(K)\}$. We call \emph{special idempotent} any idempotent of the monoid $M^K$.
\end{dfn}
As we see in Section \ref{ultima sezione}, in a Coxeter system $(W,S)$, if $I,J \subseteq S$, the idempotent function $Q^IP^J : W \rightarrow {^I}W^J$
which projects an element $w \in W$ into the representative of minimal length of the double coset $W_IwW_J$,
is a special idempotent.


\begin{thm} \label{teorema cartesiano}
  Let $K_1,\ldots,K_n$ be finite graded posets and $K:= K_1 \times \ldots \times K_n$.
Then $M\in \sm(K)$ if and only if $$M=\id_{K_1}\times \ldots \times \id_{K_{i-1}} \times N \times  \id_{K_{i+1}} \times \ldots \times \id_{K_n},$$ for some $i\in [n]$, $N \in \sm(K_i)$.
\end{thm}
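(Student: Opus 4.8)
The plan is to establish the nontrivial implication, the converse being a routine verification. For the converse, if $N\in\sm(K_i)$ and $M$ is the stated product with $N$ in position $i$, then $M\circ M=\id_K$ and the covering condition hold because a cover in a product changes exactly one coordinate; and since the hypotheses $u\leqslant v$, $u\vartriangleleft M(u)$, $M(v)\vartriangleleft v$ of the lifting property involve only the $i$-th coordinate, they reduce to the lifting property of $N$, so $M\in\sm(K)$ by the lifting-property characterization. To pass from the case $n=2$ to arbitrary $n$ I would induct on $n$: grouping $K=K_1\times K'$ with $K'=K_2\times\cdots\times K_n$ and applying the case $n=2$ shows $M$ is supported on $K_1$ or on $K'$, and in the latter case the inductive hypothesis applied to the induced special matching of $K'$ concludes. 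Hence it suffices to treat $K=K_1\times K_2$.

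So let $M\in\sm(K_1\times K_2)$. As $\hat{0}$ is the minimum, $M(\hat{0})$ is an atom of $K$, hence lies in a single factor; say $M(\hat{0})=(a_{1},\hat{0})$ with $a_{1}\in\mathrm{atom}(K_1)$ (the other case is symmetric and gives $\id_{K_1}\times N$). Since $w$ and $M(w)$ always differ in exactly one coordinate, each $w$ has a well-defined direction $D(w)\in\{1,2\}$, constant on matched pairs, and I first aim to prove $D\equiv 1$, i.e. $\pi_{2}\circ M=\pi_{2}$. The special matching inequality ($M(p)\leqslant M(q)$ whenever $p\vartriangleleft q$ and $p\neq M(q)$) quickly gives $D=1$ on all atoms, so a minimal-rank element $w=(x,y)$ with $D(w)=2$ has rank $\geqslant 2$; moreover $w$ must be matched upward, since otherwise its involutive partner $M(w)$ would be a strictly lower element of direction $2$. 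Writing $M(w)=(x,y^{*})$ with $y\vartriangleleft y^{*}$, I would split on $x$ and $y$. If $x=\hat{0}$, then for a lower cover $y'\vartriangleleft y$ the image $M(\hat{0},y')$ must raise the first coordinate off $\hat{0}$, and the inequality $M(\hat{0},y')\leqslant M(w)=(\hat{0},y^{*})$ then demands an atom below $\hat{0}$ --- impossible. If $x,y>\hat{0}$, then from $M(x,y')=(x_{1},y')$ with $x_{1}\vartriangleleft x$ together with the inequality along $(x_{1},y')\vartriangleleft(x_{1},y)$ one forces $M(x_{1},y)=w$, contradicting $M(w)=(x,y^{*})$.

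The remaining subcase $y=\hat{0}$, $x>\hat{0}$ --- where $w=(x,\hat{0})$ is matched \emph{upward} into the second coordinate and so offers no lower cover in that direction --- is the crux, and I expect it to be the main obstacle. I would descend through the matched-down partner $M(w)=(x,a_{2})$: using the (easily checked, from the lifting property) fact that $M$ restricts to a special matching of the lower interval $[\hat{0},v]$ whenever $M(v)\vartriangleleft v$, the restriction of $M$ to $I:=[\hat{0},(x,a_{2})]=[\hat{0}_{1},x]\times\{\hat{0},a_{2}\}$ is a special matching of a product of a graded poset with a two-element chain, whose minimum it sends in direction $1$. The key auxiliary step is a lemma for such products with a $2$-chain: the set $T$ of first coordinates $t$ for which $(t,\hat{0})$ is matched to $(t,a_{2})$ is an up-set not containing $\hat{0}_{1}$, and a minimal element of $T$ together with the inequality along $(s,a_{2})\vartriangleleft(t,a_{2})$ (for $s\vartriangleleft t$) forces $T=\varnothing$; hence $M|_{I}$ preserves the second coordinate. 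But $w\in I$ has direction $2$, a contradiction. This finishes $\pi_{2}\circ M=\pi_{2}$.

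It remains to deduce $M=N\times\id_{K_2}$. Writing $M(x,y)=(M_{y}(x),y)$, each $M_{y}$ is a matching of $K_1$, and I would show they coincide by comparing $M_{y}$ and $M_{y'}$ across a cover $y\vartriangleleft y'$: the inequality applied to $(x,y)\vartriangleleft(x,y')$ gives $M_{y}(x)\leqslant M_{y'}(x)$, while applying it to a suitable neighbour rules out the mixed situation in which one of $M_{y}(x),M_{y'}(x)$ lies above $x$ and the other below; as both are covers or cocovers of $x$ of equal rank, the inequality then forces equality. Since the cover relations connect $K_2$, the map $M_{y}=:N$ is independent of $y$, so $M=N\times\id_{K_2}$; and $N\in\sm(K_1)$ because $N(\hat{1}_{1})\vartriangleleft\hat{1}_{1}$ makes $(\hat{1}_{1},\hat{0})$ matched down, exhibiting $N$ as the restriction of $M$ to $[\hat{0},(\hat{1}_{1},\hat{0})]\cong K_1$, which is special by the same restriction fact.
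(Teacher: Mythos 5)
Your proof is correct, but it is organized quite differently from the paper's. The paper also reduces to $n=2$ and starts from the observation that $M(\hat{0})$ lies in one factor, but it then proceeds by two nested inductions: first an induction along $K_1$ showing that the bottom slice $K_1\times\{\hat{0}_2\}$ is $M$-stable --- and this is where the paper disposes of what you call the crux case $w=(x,\hat{0}_2)$ matched upward into the second coordinate: it takes $y\vartriangleleft x$, applies the special-matching inequality to the cover $(y,b)\vartriangleleft(x,b)$ to force $M(y,b)=(z,\hat{0}_2)$ with $z<x$, and contradicts the inductive hypothesis that $M(z,\hat{0}_2)$ stays in the bottom slice; second, with $M_1$ defined on that slice, a single induction over all of $K$ proving $M(x,y)=(M_1(x),y)$, which settles ``which coordinate moves'' and ``all slices agree'' in one sweep. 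You instead separate these two questions: you first prove globally that $M$ never moves the second coordinate, via a minimal-rank counterexample split into three cases, handling the hardest case by restricting $M$ to the lower interval $[\hat{0},M(w)]\cong[\hat{0}_1,x]\times c_2$ (a legitimate and standard tool: a special matching restricts to $[\hat{0},v]$ whenever $M(v)\vartriangleleft v$, by the lifting property); you then show the slice matchings $M_y$ agree across covers of $K_2$ and conclude by connectivity of the Hasse diagram. Both routes work and are of comparable length; the paper's propagation induction is more economical, while your decomposition isolates a reusable interval-restriction lemma and makes the ``direction'' phenomenon explicit. One small quibble: in your auxiliary lemma for $L\times c_2$, the assertion that $T$ is an up-set is neither obviously true nor needed --- the contradiction you derive from a minimal element $t\in T$ and the inequality along $(s,a_2)\vartriangleleft(t,a_2)$ already yields $T=\varnothing$ on its own.
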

\begin{proof} It is sufficient to prove the result for $n=2$.
The poset $K=K_1 \times K_2$ is finite and
 graded, with minimum $\hat{0}=(\hat{0}_1,\hat{0}_2)$.

We claim that $M(\hat{0}_1,\hat{0}_2) \in K_1 \times \{\hat{0}_2\}$ implies $M(K_1 \times \{\hat{0}_2\})\subseteq  K_1 \times \{\hat{0}_2\}$.
Let $x\in K_1\setminus \{\hat{0}_1\}$ and assume $M(y,\hat{0}_2)\in K_1 \times \{\hat{0}_2\}$ for all $y<x$;
define $(a,b):=M(x,\hat{0}_2)$. Then either $a\neq x$ and $b=\hat{0}_2$ or $a=x$ and $b \vartriangleright \hat{0}_2$.
Let $a=x$ and $y\in K_1$ be such that $y \vartriangleleft x$. Then $(y,b) \vartriangleleft (x,b)$ and $M(y,b) \neq (x,b)$; hence $M(y,b) \leqslant M(x,b)=(x,\hat{0}_2)$,
i.e. $M(y,b)=(z,\hat{0}_2)$, for some $z\in K_1$, $z<x$.
Therefore $(y,b)=M(z,\hat{0}_2)\in K_1 \times \{\hat{0}_2\}$, a contradiction. Hence $b=\hat{0}_2$ and the claim is proved.

Let $M_1: K_1 \rightarrow K_1$ be the function defined by $M(x,\hat{0}_2)=(M_1(x),\hat{0}_2)$, for all $x\in K_1$; then $M_1\in \sm(K_1)$.
It remains to prove that $M(x,y)=(M_1(x),y)$ for all $x,y$.
Let $(x,y)\in K$ be such that $y\neq \hat{0}_2$ and $M(u,v)=(M_1(u),v)$ for all $(u,v)<(x,y)$. There are two cases to be considered.
\begin{enumerate}
  \item $M(x,y) \vartriangleleft (x,y)$: in this case, if $M(x,y)=(u,v)$ then $(x,y)=M(M(x,y))=(M_1(u),v)$, i.e. $M_1(u)=x$ and $v=y$; hence $M(x,y)=(M_1(x),y)$.
  \item $(x,y) \vartriangleleft M(x,y)$: when $M_1(x) \vartriangleleft x$ we have that $M(M_1(x),y)=(M_1(M_1(x)),y)=(x,y)$. So let
    $x \vartriangleleft M_1(x)$ and consider $z\in K_2$ satisfying $z \vartriangleleft y$; then $(x,z) \vartriangleleft (x,y)$ and $M(x,z)\neq (x,y)$.
Therefore $M(x,z)=(M_1(x),z)\vartriangleleft M(x,y)$. We conclude by noting that
$M(x,y) \vartriangleright (x,y)$ and $M(x,y) \vartriangleright (M_1(x),z)$ imply $M(x,y)=(M_1(x),y)$.
\end{enumerate}
Since either $M(\hat{0}_1,\hat{0}_2) \in K_1 \times \{\hat{0}_2\}$ or $M(\hat{0}_1,\hat{0}_2) \in \{\hat{0}_1\} \times K_2$, we have proved that
either $M=M_1 \times \id_{K_2}$ or $M=\id_{K_1}\times M_2$, for some $M_1 \in \sm(K_1)$, $M_2\in \sm(K_2)$.
The reverse implication in our statement is straightforward.
\end{proof}

Under the hypothesis of Theorem \ref{teorema cartesiano} we have $\Or(K_1)\times \ldots \times \Or(K_n) \hookrightarrow \Or(K)$, as monoids.
When considering the submonoid $M^K$, we have an isomorphism.
\begin{cor}
   Let $K_1,\ldots,K_n$ be finite graded posets and $K:= K_1 \times \ldots \times K_n$.
Then, as monoids, $M^K \simeq M^{K_1}\times \ldots \times M^{K_n}$.
\end{cor}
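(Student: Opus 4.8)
The plan is to reduce everything to the description of the generators of $M^K$ furnished by Theorem \ref{teorema cartesiano}, and then to match these generators with those of the direct product $M^{K_1} \times \ldots \times M^{K_n}$ under the embedding $\iota : \Or(K_1) \times \ldots \times \Or(K_n) \hookrightarrow \Or(K)$, $(f_1, \ldots, f_n) \mapsto f_1 \times \ldots \times f_n$, already available from the discussion preceding the statement.

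First I would compute $P^M$ for an arbitrary special matching $M$ of $K$. By Theorem \ref{teorema cartesiano}, such an $M$ has the form $\id_{K_1} \times \ldots \times N \times \ldots \times \id_{K_n}$ with $N \in \sm(K_i)$ sitting in some position $i$. The key observation is that a covering relation in a product of graded posets occurs in exactly one coordinate: if $(x_1, \ldots, x_n) \vartriangleleft (y_1, \ldots, y_n)$, then the two tuples agree in all coordinates but one, say $j$, where $x_j \vartriangleleft y_j$ (otherwise, changing a single coordinate to an intermediate value would produce an element strictly between). Since $M(x)$ differs from $x$ only in coordinate $i$, the alternative $x \vartriangleleft M(x)$ holds if and only if $x_i \vartriangleleft N(x_i)$ in $K_i$, and likewise $M(x) \vartriangleleft x$ if and only if $N(x_i) \vartriangleleft x_i$. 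Comparing with the defining formula for $P^N$ read off in coordinate $i$, this yields $P^M = \id_{K_1} \times \ldots \times P^N \times \ldots \times \id_{K_n}$.

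Next I would identify the generating sets on the two sides. By the previous step, the generators $\{\id_K\} \cup \{P^M : M \in \sm(K)\}$ of $M^K$ are precisely the functions $\id_{K_1} \times \ldots \times P^N \times \ldots \times \id_{K_n}$ with $i \in [n]$ and $N \in \sm(K_i)$, together with $\id_K$. On the other hand, the direct product $M^{K_1} \times \ldots \times M^{K_n}$ is generated as a monoid by the single-coordinate tuples $(\id_{K_1}, \ldots, g, \ldots, \id_{K_n})$ with $g$ a generator of the factor $M^{K_i}$, since any tuple factors as the product of its single-coordinate entries. Under $\iota$, each such generator $(\id_{K_1}, \ldots, P^N, \ldots, \id_{K_n})$ is sent exactly to the corresponding generator $\id_{K_1} \times \ldots \times P^N \times \ldots \times \id_{K_n}$ of $M^K$, and the identity tuple goes to $\id_K$.

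Finally, since $\iota$ is an injective monoid homomorphism, its restriction to the submonoid $M^{K_1} \times \ldots \times M^{K_n}$ remains injective; and because a homomorphism carries the submonoid generated by a set $G$ onto the submonoid generated by $\iota(G)$, the image of this restriction is exactly $M^K$. Hence $\iota$ restricts to a monoid isomorphism $M^{K_1} \times \ldots \times M^{K_n} \to M^K$. I expect the only delicate point to be the coordinatewise computation of $P^M$ in the first step, resting on the one-coordinate description of coverings in a product of graded posets; once that identity is in place, the remainder is a formal matching of generators through the embedding $\iota$.
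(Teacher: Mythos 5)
Your argument is correct and follows exactly the route the paper intends (the paper omits the proof, but the preceding remark about the embedding $\Or(K_1)\times \ldots \times \Or(K_n) \hookrightarrow \Or(K)$ together with Theorem \ref{teorema cartesiano} points to precisely this matching of generators). Your explicit verification that coverings in a product occur in a single coordinate, hence that $P^M = \id_{K_1}\times \ldots \times P^N \times \ldots \times \id_{K_n}$, is the one detail the paper leaves implicit, and you handle it correctly.
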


\begin{ex}
Let $n\in \N$, $n>1$ and $n=p_1^{k_1}\cdots p_h^{k_h}$ be the prime factorization of $n$. Let $P_n:=\{z\in \N: z\mid n\}$ ordered by divisibility.
This poset is isomorphic to a Cartesian product of chains: $P_n \simeq c_{1+k_1} \times \ldots \times c_{1+k_h}$.
Let $i\in [h]$ be such that $k_i \equiv 1 \modue$, and $M_i : P_n \rightarrow P_n$ the function defined by
$$M_i(z)=\left\{
    \begin{array}{ll}
      zp_i, & \hbox{if $v_{p_i}(z)\equiv 0\modue$;} \\
      z/p_i, & \hbox{if $v_{p_i}(z)\equiv 1\modue$,}
    \end{array}
  \right.
$$ for all $z\in P_n$, where $v_p(z)$ is the $p$-adic valuation of $z$. Then  $M_i \in \sm(P_n)$.
Moreover, by Theorem \ref{teorema cartesiano},  we have  that $\sm(P_n)=\{M_i:k_i \equiv 1 \modue\}$.
Let $m:=|\{i\in [h]:k_i \equiv 1 \modue\}|$. We have proved that, as monoids, $M^{P_n}\simeq (\mathcal{P}([m]),\cup)$.
Notice that $P_n$ is a zircon (in the meaning of \cite{marietti-zirconi}) if and only if $k_i=1$ for all $i\in [h]$.
\end{ex}

The following definition appears in \cite{forcey sottile}. A surjective poset
morphism $f : H \rightarrow K$ is an {\emph{interval retract}}\footnote{In the original definition $H$ is considered to be a finite lattice.} if $f_x$ is an
interval, for all $x\in K$, and there exists a poset morphism $g : K \rightarrow H$ such that $f \circ g = \id_K$.
The next proposition asserts that special idempotents are interval retracts.
\begin{prop} \label{prop interv}
 Let $P \in \Or(K)$ be a special idempotent and $v\in \imm(P)$. Then $P_v$ is an interval of $K$.
\end{prop}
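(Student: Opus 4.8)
The plan is to reduce the statement to the existence of a maximum of the fiber $P_v=\{x\in K:P(x)=v\}$. Since $P$ is order preserving, $P_v$ is order-convex: if $x\le z\le y$ with $x,y\in P_v$, then $v=P(x)\le P(z)\le P(y)=v$, so $z\in P_v$. Moreover $v=\min P_v$, as already observed (recall $\min P_x=\{x\}$ for $x\in\imm(P)$). Hence a finite convex subset with minimum $v$ is an interval as soon as it has a maximum $w$, in which case $P_v=[v,w]$. So everything reduces to producing $\max P_v$.

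First I would fix a decomposition $P=P^{M_1}\cdots P^{M_k}$ with $M_1,\ldots,M_k\in\sm(K)$ (discarding identity factors; if none remain, $P=\id_K$ and $P_v=\{v\}$). Since $P$ is idempotent and each $P^{M_i}\in\Or(K)$, Proposition \ref{prop assorbimento} gives $P\,P^{M_i}=P=P^{M_i}P$. The first key step is an \emph{invariance lemma}: each $M_i$ maps $P_v$ into itself. Indeed, for $x\in P_v$ a short computation of $P^{M_i}(x)$ and $P^{M_i}(M_i(x))$ together with $P\,P^{M_i}=P$ yields $P(M_i(x))=P(x)=v$, regardless of whether $M_i$ raises or lowers $x$. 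As a consequence, every $x\in P_v$ is joined to $v$ by a saturated chain inside $P_v$ whose covers are matching moves: if $x\neq v$ then $x\notin\imm(P)$, so by Lemma \ref{prop punti fissi} some $M_i$ lowers $x$ with $M_i(x)\in P_v$, and one descends by induction on the rank.

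The heart of the argument is then to show that $P_v$ is \emph{upward directed}: any $x,y\in P_v$ admit a common upper bound in $P_v$. I would prove this by induction on the length $m$ of an up-path $v\vartriangleleft\cdots\vartriangleleft y'\vartriangleleft y$ realizing $y$ inside $P_v$, with $y=M(y')$ for some $M=M_i$ (the case $m=0$ being $y=v\le x$). By induction there is $z'\in P_v$ with $z'\ge x$ and $z'\ge y'$. If $M$ lowers $z'$, the lifting property applied to $y'\le z'$ (with $M$ raising $y'$ and lowering $z'$) gives $M(y')=y\le z'$, so $z'$ is the desired bound. If $M$ raises $z'$, then applying the lifting property instead to $y'\le M(z')$ (noting $M(M(z'))=z'\vartriangleleft M(z')$, so $M$ lowers $M(z')$) gives $y=M(y')\le M(z')$, while $x\le z'\le M(z')$; by the invariance lemma $M(z')\in P_v$, so $M(z')$ is the desired bound. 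Since $P_v$ is finite, upward direction forces a unique maximal element, which is $\max P_v$, completing the proof.

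The main obstacle is precisely the upward-directedness step. A naive confluence (\emph{diamond}) argument fails, because two matching moves out of the same element need not reconverge after one further step --- in the Coxeter picture this is the non-commutation of distinct simple reflections --- so one cannot simply close local diamonds. The inductive formulation along a chain to $y$, together with the two-case use of the lifting property (in particular the slightly less obvious second application, to the pair $y'\le M(z')$), is what circumvents this, and it mirrors the way the longest element of a parabolic subgroup produces the maximum of a coset.
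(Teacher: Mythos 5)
Your proof is correct, but it is organized around a different induction than the paper's. The paper fixes a factorization $P=P^{M_1}\cdots P^{M_k}$ and inducts on $k$: writing $P'=P^{M_1}\cdots P^{M_{k-1}}$, it produces $\max P_v$ explicitly from $w=\max P'_v$ (namely $w$ or $M_k(w)$ according to whether $M_k$ lowers or raises $w$), bounding the new elements $M_k(u)\vartriangleright u$, $u\in P'_v$, by the same two-case application of the lifting property that you use. Note that this forces the paper's induction to range over \emph{all} products in $M^K$, since $P'$ need not be idempotent. You instead stay with the idempotent $P$ itself: the absorption identities $PP^{M_i}=P$ of Proposition \ref{prop assorbimento} give your invariance lemma $M_i(P_v)\subseteq P_v$ (which I checked and is correct in both the raising and lowering cases), Lemma \ref{prop punti fissi} gives connectivity of $P_v$ to $v$ by matching moves, and upward-directedness then follows by induction on chain length inside the fiber; finiteness and order-convexity finish the argument. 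Both routes hinge on the same lifting-property dichotomy (your cases (a) and (b) are exactly the paper's cases 1 and 2 with $z'$ playing the role of $w$), so the combinatorial core is shared. What the paper's version buys is an explicit recursion for $v^P=\max P_v$ along the factorization, which it reuses implicitly elsewhere (e.g.\ in Corollary \ref{corollario partizione} and Theorem \ref{complemento graduato}); what yours buys is that the argument never leaves the class of idempotents and isolates the useful structural fact that every generator $M_i$ stabilizes every fiber of $P$, which the paper does not state. Your closing remark about why a naive diamond/confluence argument fails is accurate and well taken.
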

\begin{proof} Let $v\in \imm(P)$. A special idempotent is order preserving so we need only to prove that $P_v$ has minimum and maximum.
We have already observed that $\min P_v = \{v\}$.
We prove that $P_v$ has maximum.
If $P=\id_K$ then $P_v=\{v\}$ for all $v\in K$, and the maximum is $v$.
 Let $M: K \rightarrow K$ be a special matching. Hence $P_v=\{v,M(v)\}$ and  then
$$\max(P_v)=\left\{
                            \begin{array}{ll}
                              M(v), & \hbox{ if $v \vartriangleleft M(v)$;} \\
                              v, & \hbox{ if $M(v) \vartriangleleft v$.}
                            \end{array}
                          \right.$$

 Let $P \in M^K$ be such that $P=P^{M_1}\cdots P^{M_k}$, with $M_1,\ldots,M_k \in \sm(K)$. We proceed by induction on $k$,
 the case $k=1$ having been already discussed. We have that
 $v\in \imm(P')$, by Lemma \ref{prop punti fissi}; here we have defined $P':=P^{M_1}\cdots P^{M_{k-1}}$.
 By induction, $P'_v$ has maximum $w$.
 We claim that $$\max(P_v)=\left\{
                            \begin{array}{ll}
                              M_k(w), & \hbox{ if $w \vartriangleleft M_k(w)$;} \\
                              w, & \hbox{ if $M_k(w) \vartriangleleft w$.}
                            \end{array}
                          \right.$$
Notice that, if $x \in P_v\setminus P'_v$ then $x=M_k(u) \vartriangleright u$, for some $u \in P'_v$. We discuss the two possible cases.
  \begin{enumerate}
    \item $M_k(w) \vartriangleleft w$: $u \leqslant w$ and $u \vartriangleleft M_k(u)$ imply $M_k(u) \leqslant w$, by the lifting property.
    \item $w \vartriangleleft M_k(w)$: $u \leqslant w$ and $u \vartriangleleft M_k(u)$ imply $M_k(u) \leqslant M_k(w)$, by the lifting property.
  \end{enumerate}
\end{proof}
We denote by $v^P$ the maximum of $P_v$, for $v$ in the image of any special idempotent $P$. The following corollary states that any special idempotent of $\Or(K)$ gives a partition of $K$ into intervals.
 \begin{cor} \label{corollario partizione}  Let $P\in \Or(K)$ be a special idempotent. Then
  $$K=\biguplus \limits_{v\in \imm(P)} [v,v^P].$$
\end{cor}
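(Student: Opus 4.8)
The plan is to recognize that the asserted decomposition is nothing more than the partition of $K$ into the fibers of the function $P$, once Proposition \ref{prop interv} has been used to identify each fiber as an interval with the prescribed endpoints. So the work is essentially a repackaging of what precedes it.

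First I would record the set-theoretic partition. Since $P:K\rightarrow K$ is a function with image $\imm(P)$, every $x\in K$ satisfies $P(x)\in \imm(P)$ and lies in exactly one fiber, namely $P_{P(x)}$. Consequently the fibers $\{P_v : v\in \imm(P)\}$ are pairwise disjoint and cover $K$, which gives $K=\biguplus_{v\in \imm(P)} P_v$ as a disjoint union. This step uses no order structure and requires only that $P$ be a map onto $\imm(P)$.

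Next I would pin down each fiber as the claimed interval. By Proposition \ref{prop interv}, for every $v\in \imm(P)$ the fiber $P_v$ is an interval of $K$, hence possesses a minimum and a maximum. The minimum is $v$, by the observation recorded earlier that $\min P_v=\{v\}$ for all $v\in \imm(P)$; the maximum is $v^P$, by the definition introduced just before the statement. Therefore $P_v=[v,v^P]$. Substituting this identification into the fiber partition yields $K=\biguplus_{v\in \imm(P)} [v,v^P]$, as required.

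I do not anticipate any genuine obstacle here: all the substance is carried by Proposition \ref{prop interv}, which establishes that fibers of a special idempotent are intervals, and the corollary follows by combining that fact with the trivial partition of a domain into the fibers of a function. The only point that deserves an explicit word is the identification of the endpoints, and even that is immediate once $\min P_v=\{v\}$ and the definition of $v^P$ are recalled.
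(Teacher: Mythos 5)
Your proposal is correct and matches the paper's intent exactly: the corollary is stated without proof precisely because it follows immediately from Proposition \ref{prop interv} (each fiber $P_v$ is an interval with minimum $v$ and maximum $v^P$) together with the trivial partition of $K$ into the fibers of $P$. Nothing is missing.
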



%

We give now some results about projections in $\Or(K)$ which are special idempotents, whenever $K$ is Eulerian. We start with a definition.

\begin{dfn}
  We say that a projection $P\in \Or(K)$ is \emph{special} if $P\in M^K$.
\end{dfn}
By Proposition \ref{prop interv}, if $P$ is a special projection then $[x,y] \cap [x,x^P]$ is an interval of $K$, for all $x,y\in \imm(P)$, $x \leqslant y$.
It is straightforward to see that if $M$ is a special matching of $K$ then $P^M$ is a special projection.
Examples of special idempotents which are not projections are given in Section \ref{ultima sezione}, see Examples \ref{esempio non proiezione graduato}, \ref{esempi proiezioni Sn} and \ref{esempi proiezioni involuzioni Sn}.

The next two theorems are the main results of this article.
\begin{thm} \label{immagine graduata}
  Let $K$ be Eulerian and $P\in \Or(K)$ a projection. Then $\imm(P)$ is graded with rank function $\rho$.
\end{thm}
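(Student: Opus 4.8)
The plan is to prove that the restriction of $\rho$ to $\imm(P)$ is a rank function for the induced subposet $\imm(P)$; since $\imm(P)$ has minimum $\hat{0}$ and maximum $P(\hat{1})$, this makes it graded with rank function $\rho$. By the conventions fixed in the paper this reduces to a single local statement: every covering relation of $\imm(P)$ must increase $\rho$ by exactly $1$. So I would fix $x,y\in\imm(P)$ with $x$ covered by $y$ \emph{inside} $\imm(P)$, which means $x<y$ and $[x,y]\cap\imm(P)=\{x,y\}$, and aim to show $\rho(x,y)=1$.

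First I would analyse how $P$ acts on the interval $[x,y]$ of $K$. For any $u\in[x,y]$, order preservation and idempotence give $x=P(x)\leqslant P(u)\leqslant P(y)=y$ with $P(u)\in\imm(P)$, so $P(u)\in[x,y]\cap\imm(P)=\{x,y\}$. Regressivity then pins down the fibre over $y$: if $P(u)=y$ then $y=P(u)\leqslant u\leqslant y$ forces $u=y$, so $[x,y]\cap P_y=\{y\}$. Consequently $[x,y]\cap P_x=[x,y]\setminus\{y\}$. Since $P$ is a projection, $[x,y]\cap P_x$ is an interval of $K$, and hence $[x,y]\setminus\{y\}$ has a maximum; equivalently, $y$ covers a \emph{unique} element of $[x,y]$.

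The crux is to contradict this uniqueness whenever $\rho(x,y)\geqslant 2$, and this is where the Eulerian hypothesis enters. I would show that in an Eulerian poset an interval $[x,y]$ with $\rho(x,y)\geqslant 2$ has at least two coatoms: if $y$ covered a single element $c$, then $\{z:x\leqslant z<y\}=[x,c]$, so the defining recursion for $\mu_K$ gives $\mu_K(x,y)=-\sum_{z\in[x,c]}\mu_K(x,z)=0$ because $\rho(x,c)\geqslant 1$ makes the interval sum vanish, contradicting $\mu_K(x,y)=(-1)^{\rho(x,y)}$. Two distinct coatoms are two distinct maximal elements of $[x,y]\setminus\{y\}$, so that set cannot be an interval, contradicting the previous paragraph. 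Hence $\rho(x,y)=1$, and the restriction of $\rho$ is a rank function on $\imm(P)$. I expect this last step to be the main obstacle, not for its length but because it is precisely the point where the projection property (forcing a unique coatom) and the Eulerian property (forcing at least two) are played against each other; the coatom/M\"obius mechanism is the same one already used in the proof of Proposition \ref{moebius proiezioni}, which I would reuse.
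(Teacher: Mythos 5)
Your proof is correct. It exploits the same underlying tension as the paper's argument --- the projection property forces a certain interval of $K$ to have a unique coatom, while the Eulerian property forces at least two --- but it organizes the proof differently. The paper fixes $u\leqslant v$ in $\imm(P)$ with $\rho(u,v)>1$ and shows that some coatom of $[u,v]$ lies in $\imm(P)$: assuming the contrary, it first uses the projection property to prove that the fibres $[P(z),z]$ over distinct coatoms $z$ of $[u,v]$ are disjoint, then picks a coatom $y$ with $P(y)$ maximal among the $P(z)$ and deduces that $[P(y),v]$ has $y$ as its only coatom, contradicting Eulerianness. You instead verify the rank condition directly on a covering pair $x\vartriangleleft y$ of the induced subposet $\imm(P)$: the observation that $[x,y]\cap P_x=[x,y]\setminus\{y\}$ lets you apply the projection hypothesis to the covering pair itself, so $[x,y]$ has a unique coatom, and the two-coatom property of Eulerian intervals (which you rederive correctly from the M\"obius recursion) forces $\rho(x,y)=1$. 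Your route is shorter, avoiding the disjointness of the fibres and the maximality argument, and the two reductions of gradedness --- ``covers have rank difference $1$'' versus ``intervals of rank greater than $1$ contain an intermediate image element'' --- are both legitimate; each argument uses only the definition of projection and the Eulerian coatom count, applied to different intervals.
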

\begin{proof} We have already noted that $\imm(P)$ has maximum and minimum.
Let $u,v \in \imm(P)$ be such that $u\leqslant v$ and $\rho(u,v)>1$.
Let $C(u,v):=\{z\in K: u \leqslant z \vartriangleleft v\}$ be the set of coatoms of $[u,v]$. Then $|C(u,v)|>1$, since $K$ is Eulerian. Let us assume $z \not \in \imm(P)$, for all $z\in C(u,v)$.
Then $P(z)\neq P(y)$, being $P$ a projection, and $[P(y),y] \cap [P(z),z] = \varnothing$, for all $y,z\in C(u,v)$.
Let $y\in C(u,v)$ such that $P(y)$ is a maximal element of the set $\{P(z):z\in C(u,v)\}$.
 If $z\in [P(y),v]$ for some $z \in C(u,v)\setminus \{y\}$ then $P(y)<z$ and this implies $P(y)<P(z)$, a contradiction.
 Since the coatoms of $[P(y),v]$ are coatoms of $[u,v]$, we conclude that $[P(y),v]$ is not Eulerian, which is a contradiction. Then there exists  $z \in \imm(P) \cap C(u,v)$ and this proves the result.
%
%
%
\end{proof}

The previous result is well known for the idempotent functions $P^J: W \rightarrow W^J$ and $Q^J: W \rightarrow {^JW}$,
where $(W,S)$ is a Coxeter system and $J \subseteq S$ (see, e.g. \cite[Theorem 2.5.5 and Corollary 2.7.10]{BB} and the next section).
We end with a generalization, in the finite case, of \cite[Theorem 5.2]{sentinelli comp}.

\begin{thm} \label{complemento graduato}
  Let $K$ be Eulerian and $P\in \Or(K)$ a special projection. Then the poset $[K\setminus \imm(P)]\cup \{\hat{0}\}$ is graded with rank function $\rho$.
\end{thm}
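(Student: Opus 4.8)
The plan is to unpack the word \emph{graded}: I must show that the induced subposet $C:=[K\setminus\imm(P)]\cup\{\hat{0}\}$ has a minimum and a maximum, and that the restriction of $\rho$ is a genuine rank function, i.e.\ that every covering relation of $C$ raises $\rho$ by exactly $1$. If $P=\id_K$ then $C=\{\hat{0}\}$ is trivially graded, so from now on assume $P\neq\id_K$. The minimum of $C$ is $\hat{0}$ by construction.

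First I would pin down the maximum, and this is the one step where the \emph{special} hypothesis is genuinely needed. Writing $P=P^{M_1}\cdots P^{M_k}$ with $k\geq 1$ and $M_1,\dots,M_k\in\sm(K)$, note that nothing covers $\hat{1}$, so each $M_i$ must move it downward; hence $P^{M_i}(\hat{1})=M_i(\hat{1})\vartriangleleft\hat{1}$, so $P^{M_i}(\hat{1})\neq\hat{1}$ for every $i$. By Lemma \ref{prop punti fissi} this forces $P(\hat{1})\neq\hat{1}$, so $\hat{1}\in K\setminus\imm(P)\subseteq C$, and being the top of $K$ it is the maximum of $C$. (For an arbitrary idempotent of $\Or(K)$ the top could lie in $\imm(P)$ and $C$ could fail to have a maximum, which is why the special structure enters here.)

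Next I would reduce gradedness to a statement about covers: since $\hat{0}\in C$ and $\rho(\hat{0})=0$, the poset $C$ will be graded with rank function $\rho$ as soon as $\rho(y)-\rho(x)=1$ for every pair $x<y$ forming a covering relation of the induced subposet $C$; indeed every saturated chain of $C$ from $\hat{0}$ to a given element would then have length equal to its $\rho$-value. So I would suppose such a cover with $\rho(x,y)\geq 2$ and derive a contradiction by exhibiting $z\in C$ with $x<z<y$. Here $y\neq\hat{0}$, so $y\in K\setminus\imm(P)$ and $P(y)<y$. If $x\in K\setminus\imm(P)$ and $x\nless P(y)$, then Lemma \ref{lemma complemento} gives $[x,y]\subseteq K\setminus\imm(P)\subseteq C$, and any interior point of the rank-$\geq 2$ interval $[x,y]$ is the desired $z$. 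In the complementary case $x\leq P(y)$ (which includes $x=\hat{0}$) I would argue on the coatoms of $[x,y]$: since $K$ is Eulerian this interval has at least two coatoms. If every coatom $w$ were in $\imm(P)$, then $w=P(w)\leq P(y)$ for each, so all coatoms would be $\leq P(y)<y$; but $P(y)\in[x,y)$ lies beneath some coatom $w_1$, forcing $w_1=P(y)$ and hence forcing every coatom to equal $w_1$, which contradicts having at least two coatoms. Thus some coatom $w$ lies in $K\setminus\imm(P)\subseteq C$, and $\rho(x,w)=\rho(x,y)-1\geq 1$ yields $x<w<y$.

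The main obstacle is precisely this coatom count in the second case: it is what forbids ``long'' covers in $C$, and it is where the Eulerian hypothesis does the work, first guaranteeing at least two coatoms and then letting order-preservation of $P$ collapse them to a single one. The only other delicate point is the bookkeeping that routes the two cases through Lemma \ref{lemma complemento}, ensuring that $P(y)$ really lies in $[x,y]$ whenever the coatom argument is invoked. Once the cover claim is in hand, combining it with the minimum $\hat{0}$ and the maximum $\hat{1}$ established above shows that $C$ is graded with rank function $\rho$, as claimed.
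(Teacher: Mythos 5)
Your proof is correct, and it takes a genuinely different route from the paper's. The paper establishes, by induction on $\rho(u,v)$, that every interval $[u,v]$ with $u,v\notin\imm(P)$ and $\rho(u,v)>1$ contains a coatom outside $\imm(P)$; that induction leans on the factorization $P=P^{M_1}\cdots P^{M_k}$, the lifting property of the $M_i$, Proposition \ref{prop assorbimento}, and the projection hypothesis (needed to form $\max([P(u),P(v)]\cap P_{P(u)})$). You replace all of this by a single case split on whether $x\leq P(y)$: Lemma \ref{lemma complemento} disposes of the case $x\nless P(y)$, and in the other case a direct coatom count does the work --- if every coatom of $[x,y]$ were fixed by $P$, order preservation would push them all below $P(y)$ and hence collapse them to one element, against the Eulerian two-coatom property. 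The case split is genuinely exhaustive, since $x\in\imm(P)$ (in particular $x=\hat{0}$) already forces $x=P(x)\leq P(y)$. The payoff is that your argument never invokes the projection hypothesis and uses the special-matching structure only to guarantee $P(\hat{1})\neq\hat{1}$; it therefore proves the stronger statement that $[K\setminus \imm(P)]\cup\{\hat{0}\}$ is graded with rank function $\rho$ for \emph{every} idempotent $P\in\Or(K)$ of an Eulerian poset satisfying $P(\hat{1})\neq\hat{1}$, which in particular subsumes the complement half of Theorem \ref{immagine graduata parziale}. The paper's approach, by contrast, stays parallel to the Coxeter-group prototype of \cite{sentinelli comp} and exhibits the intermediate element as a coatom of $[u,v]$ in every case. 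If you write this up, make explicit the two small facts you use silently: that any element of $[x,y]$ other than $y$ lies below some coatom of $[x,y]$, and that two comparable coatoms coincide because $\rho$ is a rank function on $K$.
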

\begin{proof} Notice that $P(\hat{1})=\hat{1}$ if and only if $P=\id_K$. Hence, if $P\neq \id_K$, the set $K\setminus \imm(P)$ has maximum $\hat{1}$.  Let $P=P^{M_1}\cdots P^{M_k}$ for some $M_1,\ldots,M_k$ special matchings of $K$.
  Let $u,v\in K\setminus \imm(P)$ such that $u\leqslant v$ and $\rho(u,v)>1$.
  We claim that there exists $z\not \in \imm(P)$ such that $u \leqslant z \vartriangleleft v$.
  If $u \nless P(v)$ then the result follows by Lemma \ref{lemma complemento}.
  Let $u < P(v)$. We proceed by induction on $\rho(u,v)$. Let $\rho(u,v)=2$; then $[u,v]=\{u,v,P(v),z\}$, for some $z\in K$.
  If $z\in \imm(P)$ then $P^{M_i}(z)=z$ for all $i\in [k]$; moreover there exits $i\in [k]$ such that $M_i(v) \vartriangleleft v$.
  Then $z \vartriangleleft M_i(z)$ and, by the lifting property, $z \leqslant M_i(v)=P(v)$, a contradiction.
  Let $\rho(u,v)>2$. We let $y:=\max\left([P(u),P(v)]\cap P_{P(u)}\right)$ and $i\in [k]$ be such that $M_i(v) \vartriangleleft v$. There are two
  cases to be considered.

  \begin{enumerate}
    \item $y \vartriangleleft M_i(y)$: by Proposition \ref{prop assorbimento}, $P(M_i(y))=PP^{M_i}(M_i(y))=P(y)=P(u)$, and, by the maximality of $y$,
  $M_i(y) \nless P(v)$, so the result follows by Lemma \ref{lemma complemento}.
    \item $y \vartriangleright M_i(y)$: if $P^{M_i}(v) \neq P(v)$, then $P^{M_i}(v) \not \in \imm(P)$  and the result is true by our inductive hypothesis, since $u \leqslant y<P(v) < P^{M_i}(v) \vartriangleleft v$.
    Let $P^{M_i}(v) =P(v)$. Since $K$ is Eulerian, there exists $w \vartriangleleft v$ such that $y \leqslant w$ and $w\neq P(v)$.
    Therefore $w\not \in \imm(P)$, otherwise $w=P(w) \leqslant P(v)$, and the result is proved.
    \end{enumerate}
\end{proof}

\begin{ex} \label{esempio involuzioni} In this example we refer to Hultman's articles \cite{hultman3} and \cite{hultman4} for results and references about twisted involutions.
  Let $\mathfrak{I}(\theta)$ be the set of twisted involutions of a finite Coxeter system $(W,S)$. Ordered by inducing the Bruhat order, this is an Eulerian poset.
Let $s\in S$. By \cite[Theorem 4.5]{hultman4}, the function $M_s : \mathfrak{I}(\theta) \rightarrow \mathfrak{I}(\theta)$ defined by
$$M_s(v)=\left\{
         \begin{array}{ll}
           vs, & \hbox{if $\theta(s)vs=v$;} \\
           \theta(s)vs, & \hbox{otherwise,}
         \end{array}
       \right.
$$ for all $v\in \mathfrak{I}(\theta)$, is a special matching of $\mathfrak{I}(\theta)$. Then $P^{M_s}$ is a projection and, by Theorem \ref{immagine graduata}, $\imm(P^{M_s})$
is graded with same rank function of $\mathfrak{I}(\theta)$. The  M\"obius function of this poset is given in Proposition \ref{moebius proiezioni}. Moreover, by Theorem \ref{complemento graduato}, $[\mathfrak{I}(\theta) \setminus \imm(P^{M_s})]\cup \{e\}$
is graded with same rank function of $\mathfrak{I}(\theta)$, where $e$ is the identity in $W$.

Let us consider $(W,S)=(S_4,\{s_1,s_2,s_3\})$, a Coxeter system of type $A_3$, and  $\theta=\id_W$. Then $\mathfrak{I}(\theta)=\mathrm{Invol}(S_4)$ is the poset of involutions of $S_4$; its Hasse
diagram is plotted in \cite[Figure 2.14]{BB}. Using SageMath we have found that $|\sm(\mathrm{Invol}(S_4))|=6$; three of them are $M_{s_1}$, $M_{s_2}$ and $M_{s_3}$.
The other ones are $M_1,M_2,M_3$ defined by:
\begin{enumerate}
  \item $M_1(e)=2134$, $M_1(4231)=4321$, $M_1(1432)=3412$, $M_1(1243)=2143$ and $M_1(1324)=3214$;
  \item $M_2(e)=1324$, $M_2(4231)=3412$, $M_2(1432)=1243$, $M_2(2134)=3214$ and $M_2(2143)=3412$;
 \item $M_3(e)=1243$, $M_3(4231)=4321$, $M_3(1432)=1324$, $M_3(2134)=2143$ and $M_3(3214)=3412$.
\end{enumerate}
We find also that $|M^{\mathrm{Invol}(S_4)}|=46$ and $|E(M^{\mathrm{Invol}(S_4)})|=22$.
\end{ex}

\subsection{Special projections in finite Coxeter groups} \label{ultima sezione}

In this section we give the motivating examples of our investigation. We refer to \cite{BB} and \cite{humphreysCoxeter} for notation and terminology
concerning Coxeter groups. We consider only finite Coxeter groups, although many of the results mentioned are true in the general case.

Let $(W,S)$ be a finite Coxeter system. This consists of a finite group $W$ with a presentation given by a set $S$ of involutive generators. For $w \in W$, the natural number $\ell(w)$ is the \emph{length} of $w$, relative to the presentation $(W,S)$.  For any $I,J\subseteq S$ we let
\begin{eqnarray*} W^J&:=&\{w\in W:\ell(ws)>\ell(w)~\forall~s\in J\},
\\ {^JW}&:=&\{w\in W:\ell(sw)>\ell(w)~\forall~s\in J\},
\\ {^IW^J} &:=& {^IW}\cap W^J.
\end{eqnarray*}



The subgroup of $W$ generated by $J\subseteq S$ is denoted with $W_J$. Such a subgroup is usually called a \emph{parabolic subgroup}. In particular, $W_S=W$ and $W_\varnothing =
\{e\}$, where $e$ is the identity in $W$. As sets, one can observe that $W^J \simeq W/W_J$ and ${^J}W \simeq W_J \backslash W$. For this reason, sometimes in the literature they are called \emph{right} and \emph{left} \emph{quotients}, while a set ${^IW^J}$ is called \emph{double quotient}, since ${^IW^J}$ is a set of representative of $W_I \backslash W / W_J$.

Given a finite Coxeter system $(W,S)$, we write $\leqslant$ for the \emph{Bruhat order} on the group $W$ (see \cite[Chapter~2]{BB} or \cite[Chapter~5]{humphreysCoxeter}).
This order can be defined by the subword property. The poset $(W,\leqslant)$ is graded, with rank function $\ell$, and it is Eulerian.
The set $W^J$, ordered by inducing the Bruhat order, has a unique maximal element
$w_0^J$.
For any $J\subseteq S$, each element $w\in W$ factorizes uniquely as
$w=w^Jw_J$, with $\ell(w)=\ell(w^J)+\ell(w_J)$, $w^J\in W^J$ and $w_J\in W_J$.
The maximal element of $W$, denoted by $w_0$, factorizes as $w_0=w_0^Jw_0(J)$, where $w_0(J)$ is the maximal element of the subgroup $W_J$.
The function $P^J:W \rightarrow W^J$ defined by the assignment $w \mapsto w^J$, for all $w\in W$, is a poset morphism (\cite[Proposition~2.5.1]{BB}). Notice that the factorization $w=w^Jw_J$ and the subword property imply that $P^J(w)\leqslant w$. Then $P^J$ is regressive and order preserving. Analogous results hold for the idempotent function $Q^J:W \rightarrow {^JW}$, defined similarly to $P^J$.
We have that $\imm(P^J)=W^J$, $\imm(Q^J)={^JW}$ and $P^JQ^I=Q^IP^J$, for all $I,J \subseteq S$ (see, e.g. \cite[Lemma~2.6]{sentinelli-isomoprhism}).
Moreover $\imm(Q^IP^J)={^IW^J}$ and
\begin{enumerate}
  \item $P^J_u=uW_J=[u,uw_0(J)]$,
  \item $Q^J_v=W_Jv=[v,w_0(J)v]$,
  \item $(Q^IP^J)_w=W_IwW_J=[w,(w_0(I))^{I\cap J_w}ww_0(J)]$,
\end{enumerate}
where $J_w:=\{wsw^{-1}:s\in J\}$, for all $u\in W^J$, $v\in {^JW}$, $w\in {^IW^J}$.

For $w=s_1\cdots s_k \in W$ it is well defined a function $P^w \in \Or(W)$, obtained  by setting (see, e.g. \cite[Section 4]{Sentinelli-Artin})
$$P^w:=P^{\{s_1\}}\cdots P^{\{s_k\}}.$$  The function $P^w$ is idempotent if and only if $w=w_0(J)$, for some $J\subseteq S$ (\cite[Lemma 4.8]{Sentinelli-Artin}); moreover
the equality $P^{w_0(J)}=P^J$ holds. Analogous results are true for the functions $Q^J$.
The \emph{$0$-Hecke monoid} $(W,\ast)$ (or \emph{Coxeter monoid}, see e.g. \cite{KenneyCoxetermonoid}) is then isomorphic to the monoids $W^R:=\{P^w: w\in W\}$ and $W^L:=\{Q^w: w\in W\}$, and  $E(W^R)=\{P^J: J \subseteq S\}$, $E(W^L)=\{Q^J: J \subseteq S\}$. We let $W^{LR} \subseteq \Or(W)$ to be the submonoid generated by the set of idempotents $\{Q^IP^J: I, J \subseteq S\}$.
\begin{lem} \label{lemma 0-Heche bilatero}
  Let $(W,S)$ be a finite Coxeter system.
  Then $E(W^{LR})=\{Q^IP^J: I, J \subseteq S\}$.
\end{lem}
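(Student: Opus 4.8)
The claim is that the idempotents of the monoid $W^{LR}$ (generated by the family $\{Q^IP^J: I,J\subseteq S\}$) are exactly the functions $Q^IP^J$ themselves. One inclusion is immediate: each $Q^IP^J$ is idempotent, since it is the projection onto ${^IW^J}$ with $\imm(Q^IP^J)={^IW^J}$, so $\{Q^IP^J:I,J\subseteq S\}\subseteq E(W^{LR})$. The work is the reverse inclusion: any idempotent obtained by composing generators must coincide with some single $Q^IP^J$.

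\smallskip
\textbf{The plan.}
The plan is to show that an arbitrary product of generators, once it is idempotent, collapses to one generator. The key structural fact I would exploit is the commutation $P^JQ^I=Q^IP^J$ (recalled in the excerpt), together with the fact that $\{P^J\}$ and $\{Q^I\}$ each form the idempotents of a $0$-Hecke (Coxeter) monoid, i.e. $E(W^R)=\{P^J:J\subseteq S\}$ and $E(W^L)=\{Q^J:J\subseteq S\}$. First I would argue that, because every $Q^I$ commutes with every $P^J$, any word in the generators $Q^{I_1}P^{J_1},\dots,Q^{I_m}P^{J_m}$ can be rearranged so that all the left factors $Q^{I_r}$ are gathered on the left and all the right factors $P^{J_r}$ on the right. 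This reduces any element of $W^{LR}$ to the form $Q\,P$ where $Q\in W^L$ is a product of the $Q^{I_r}$ and $P\in W^R$ is a product of the $P^{J_r}$. Next, using that $W^L$ and $W^R$ are (isomorphic to) the $0$-Hecke monoid, a product of generators in $W^L$ is itself some $Q^w$, and likewise the $P$-part is some $P^{w'}$; so every element of $W^{LR}$ has the form $Q^{w}P^{w'}$.

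\smallskip
\textbf{Imposing idempotency.}
Now suppose $Q^wP^{w'}$ is idempotent. I would apply Proposition~\ref{prop assorbimento}: writing the element as a product of its generator factors, idempotency of $f$ forces $f_i f = f$ for each generating factor $f_i$. Applied to the left $Q$-generators this should force the $Q$-part to be idempotent in $W^L$, hence equal to some $Q^I$ with $I\subseteq S$ (since $E(W^L)=\{Q^J:J\subseteq S\}$); symmetrically the $P$-part must be some $P^J$. The cleanest route is: from $Q^wP^{w'}$ idempotent, Lemma~\ref{prop punti fixi}'s companion (Proposition~\ref{prop assorbimento}) gives that the whole element absorbs each $Q^{I_r}$ and each $P^{J_r}$; using commutation to separate variables, the $Q$-factor of $f$ is a fixed point of each $Q^{I_r}$ on the relevant image, forcing it to be the idempotent $Q^I$ where $I=\bigcup_r I_r$ up to the Coxeter-monoid product, and similarly $P=P^J$ with $J=\bigcup_r J_r$. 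Thus $f=Q^IP^J$, completing the reverse inclusion.

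\smallskip
\textbf{Anticipated obstacle.}
The main obstacle is the separation-of-variables step: commuting $Q$'s past $P$'s is clean, but I must be careful that reducing each side to a \emph{single} idempotent $Q^I$ (resp.\ $P^J$) genuinely uses idempotency of the \emph{combined} operator $Q^wP^{w'}$ rather than of each factor alone. A product $Q^wP^{w'}$ can fail to be idempotent even when neither $w$ nor $w'$ is a longest coset element, so I cannot simply idempotent-ize each side independently; I must check that idempotency of the composite propagates to each side. I expect Proposition~\ref{prop assorbimento} (the equivalence of idempotency with $f_if=f$ and $ff_i=f$ for all generating factors $f_i$) to be exactly the tool that licenses this propagation, since it lets me read off, factor by factor, that $f$ absorbs every $Q^{I_r}$ and every $P^{J_r}$, and then the commutation plus the known structure $E(W^L),E(W^R)$ finishes the identification $f=Q^IP^J$.
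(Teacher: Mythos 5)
Your first half coincides with the paper's: the commutation $P^JQ^I=Q^IP^J$, together with the fact that $W^L$ and $W^R$ are closed under composition (each being a copy of the Coxeter monoid), reduces every element of $W^{LR}$ to the form $Q^uP^v$ with $u,v\in W$, and the inclusion $\{Q^IP^J: I,J\subseteq S\}\subseteq E(W^{LR})$ is immediate. The gap is in your separation-of-variables step. It is simply false that idempotency of $Q^uP^v$ propagates to $Q^u$ and $P^v$ separately: in $S_3$ with $S=\{s,t\}$, the element $Q^{\{s\}}Q^{\{t\}}P^{S}=Q^{st}P^{sts}$ is the constant map onto $e$ (since $P^S$ projects everything to $e$), hence idempotent, while its gathered $Q$-part $Q^{st}$ satisfies $(Q^{st})^2=Q^{sts}=Q^{S}\neq Q^{st}$ (indeed $Q^{st}(w_0)=t\neq e=Q^S(w_0)$). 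Proposition \ref{prop assorbimento} does not license the propagation: it only yields $Q^{I_r}Q^uP^v=Q^uP^v$, and since $P^v$ is not right-cancellable you cannot deduce $Q^{I_r}Q^u=Q^u$ --- in the example above $Q^{\{t\}}Q^{st}=Q^{tst}=Q^{S}\neq Q^{st}$ even though $Q^{\{t\}}f=f$. The identification ``$I=\bigcup_r I_r$ up to the Coxeter-monoid product'' is likewise unfounded.

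The paper closes exactly this gap with a different device, which you are missing: because the Coxeter monoid is an ordered monoid ($w\leqslant w\ast w$ for all $w$) and $W$ is finite, the powers $(Q^u)^h$ and $(P^v)^k$ eventually stabilize at idempotents, which by $E(W^L)=\{Q^I: I\subseteq S\}$ and $E(W^R)=\{P^J: J\subseteq S\}$ must equal some $Q^I$ and some $P^J$. Choosing $r$ at least both stabilization indices and using idempotency of the \emph{composite}, one gets $Q^uP^v=(Q^uP^v)^r=(Q^u)^r(P^v)^r=Q^IP^J$. Note that this argument never claims $Q^u=Q^I$ or $P^v=P^J$ (false in the example: $Q^{st}\neq Q^S$); it only identifies the product. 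Replacing your propagation step by this power-stabilization argument turns your sketch into the paper's proof.
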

\begin{proof}
  We already know that $\{Q^IP^J: I, J \subseteq S\} \subseteq E(W^{LR})$. Moreover $W^{LR}=\{Q^uP^v: u,v \in W\}$.
 Let $u,v \in W$ and $Q^uP^v$ be idempotent. Since the Coxeter monoid $(W,\ast)$ with the Bruhat order is an ordered monoid (see, e.g. \cite[Lemma~2]{KenneyCoxetermonoid}),
we have that $w \leqslant w \ast w$, for all $w\in W$.
Then the finiteness of $W$ implies the existence of $h,k \in \N$ such that $(Q^u)^h=(Q^u)^{h+1}$ and $(P^v)^k=(P^v)^{k+1}$; then $(Q^u)^h$ and $(P^v)^k$ are idempotent, i.e. $(Q^u)^h=Q^I$ and $(P^v)^k=P^J$, for some $I,J \subseteq S$.
Therefore, if we let $r:=\max\{h,k\}$,  $Q^uP^v=(Q^uP^v)^r=(Q^u)^r(P^v)^r=Q^IP^J$.
\end{proof}

A special matching $M$ of an interval $[u,v]$ in $(W,\leqslant)$ is a \emph{right multiplication matching} if there exists $s\in S$ such that $M(z)=zs$, for all $z\in [u,v]$.
Analogously is defined a \emph{left multiplication matching}. In general, not all special matching are multiplication mathings (see e.g. \cite[Section 5.6]{BB}).
If $M\in \sm(W)$ is a multiplication matching then $P^M\in \{P^s, Q^s\}$, for some $s\in S$.
Therefore the functions $P^J=P^{w_0(J)}$ and $Q^J=Q^{w_0(J)}$ are special idempotents; more in general the function $P^JQ^I : W \rightarrow {^IW^J}$ is a special idempotent, for all $I,J \subseteq S$.

In the following proposition we give the idempotents of the monoid $M^{S_n}$, where $S_n$ is the symmetric group of order $n!$
with its standard Coxeter presentation.
\begin{prop} \label{prop monoide simmetrico}
  Let $n>1$ and $(S_n,S)$ be a Coxeter system of type $A_{n-1}$. Then $$E(M^{S_n})=\{Q^IP^J: I,J \subseteq S\}.$$
\end{prop}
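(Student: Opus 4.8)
The plan is to reduce the statement to the already-established Lemma~\ref{lemma 0-Heche bilatero} by proving that the two monoids coincide, namely $M^{S_n}=W^{LR}$. Once this identity is in hand we get $E(M^{S_n})=E(W^{LR})=\{Q^IP^J:I,J\subseteq S\}$ at once. One inclusion is routine: by the factorization $P^J=P^{\{s_1\}}\cdots P^{\{s_k\}}$ (and similarly for $Q^I$), each generator $Q^IP^J$ of $W^{LR}$ is a product of the idempotents $P^{\{s\}}$ and $Q^{\{s\}}$, and each of these equals $P^M$ for $M$ a right, resp.\ left, multiplication matching by $s$; hence $Q^IP^J\in M^{S_n}$ and $W^{LR}\subseteq M^{S_n}$. (Consistently, the inclusion $\{Q^IP^J\}\subseteq E(M^{S_n})$ has already been observed, since each $Q^IP^J=P^JQ^I$ is a special idempotent.) So everything comes down to the reverse inclusion $M^{S_n}\subseteq W^{LR}$.

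Since $M^{S_n}$ is generated by $\{\id_{S_n}\}\cup\{P^M:M\in\sm(S_n)\}$, the inclusion $M^{S_n}\subseteq W^{LR}$ follows once I know that every generator $P^M$ lies in $W^{LR}$. The key claim on which I would hang the whole argument is therefore purely about matchings: \emph{every special matching of the full symmetric group $S_n$ is a multiplication matching}. Granting this, any $M\in\sm(S_n)$ is left or right multiplication by some $s\in S$, so $P^M\in\{P^s,Q^s\}\subseteq W^{LR}$, and we are done.

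To prove the key claim I would argue by induction on the Bruhat order, starting from the bottom. Since $e$ is the minimum, $M(e)$ is an atom, say $M(e)=s_i$. For any other atom $s_j$ the lifting property applied to $e\vartriangleleft s_j$ forces $s_i\leqslant M(s_j)$ and $s_j\vartriangleleft M(s_j)$, so $M(s_j)\in\{s_is_j,\,s_js_i\}$; when $|i-j|\geqslant 2$ these coincide and $M(s_j)$ is determined (consistently with both left and right multiplication by $s_i$), while for $|i-j|=1$ there are exactly two admissible values, one matching right multiplication and one matching left multiplication. The substance of the argument is to show that these local choices are globally coherent: the adjacent generators all force the same \emph{type} (all ``right'' or all ``left''), and then, assuming $M$ agrees with multiplication by $s_i$ on the side so fixed for all elements of length $<\ell$, one propagates the agreement to length $\ell$ using that every such element covers one of smaller length together with the lifting property.

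The main obstacle is exactly this coherence statement: ruling out a ``mixed'' matching that behaves like right multiplication on part of $S_n$ and like left multiplication elsewhere. This is where the braid relations $s_is_{i+1}s_i=s_{i+1}s_is_{i+1}$ and the commutations of type $A$ are essential, and the cleanest route is to analyse $M$ on the dihedral and rank-three subintervals (for instance the hexagon $[e,s_is_{i+1}s_i]$), where the two multiplication matchings are the only possibilities, and to feed this back into the induction. This rigidity of special matchings in type $A$ may also be quoted from the literature on special matchings (e.g.\ \cite{caselli marietti}, \cite{caselli marietti2}); with it established, the identity $M^{S_n}=W^{LR}$, and hence the Proposition, follow as above.
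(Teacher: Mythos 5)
Your proposal is correct and follows essentially the same route as the paper: reduce to showing $M^{S_n}=W^{LR}$ via the fact that all special matchings of $S_n$ are multiplication matchings, then invoke Lemma~\ref{lemma 0-Heche bilatero}. The paper simply cites \cite[Corollary 3.6]{bcm3} for that key fact about matchings of $S_n$; your inductive sketch of it is not completed (the ``coherence'' step is left open), but you correctly note it can be quoted from the special-matchings literature, which is exactly what the paper does.
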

\begin{proof}
By \cite[Corollary 3.6]{bcm3}, all special matchings of $S_n$ are multiplication matchings. This implies the equality $M^{S_n}=W^{LR}$. Hence, by Lemma \ref{lemma 0-Heche bilatero}, $E(M^{S_n})=\{Q^IP^J: I,J \subseteq S\}$.
\end{proof}

\begin{oss}
  The special matchings of a Bruhat interval $[e,w]$ in a symmetric group $S_n$ are classified in \cite[Theorem 5.1]{brenti-cohomology}.
  A characterization of special matchings of a lower Bruhat interval in any Coxeter group is given in \cite{caselli marietti2}.
It could be interesting to investigate on the monoids $M^{[e,w]}$ and their idempotents.
\end{oss}
\begin{oss}
The submonoid of $M^W$ generated by $\{P^s:s \in S\}$ is the Coxeter monoid (also known as $0$-Hecke monoid; see \cite{denton1}, \cite{Sentinelli-Artin} and references therein). When $W$ is the symmetric group $S_{n+1}$,  the non-commuting graph of the set $\{P^J:J \subseteq S\}$ is proved to be $n$-universal for forests in \cite{Sentinelli-grafi},
and conjectured to be $n$-universal. A fortiori, the same can be asserted for the non-commuting graph of the set of special idempotents $E(M^{S_{n+1}})$.
\end{oss}

By Proposition \ref{prop monoide simmetrico}, for a Coxeter systems of type $A_{n-1}$,
the partition of Corollary \ref{corollario partizione} is a left, or right or a double coset partition of the symmetric group $S_n$.
In Figure \eqref{fig-S3} we exhibit the Hasse diagram of the lattice $E(M^{S_3})$, where $S_3$ is the symmetric group of order $6$, generated by the simple transpositions $\{s,t\}$.

\begin{figure} \begin{center}\begin{tikzpicture}
\matrix (a) [matrix of math nodes, column sep=0.2cm, row sep=0.6cm]{
 & & P^{sts} & & \\
P^sQ^t & P^sQ^s  & & P^tQ^t & P^tQ^s   \\
P^s & Q^t & & Q^s & P^t \\
& & \id_{S_3} & & \\};

\foreach \i/\j in {4-3/3-1, 4-3/3-2,4-3/3-4, 4-3/3-5,3-1/2-1,3-1/2-2,3-2/2-1,3-2/2-4,3-4/2-2,3-4/2-5,3-5/2-4,3-5/2-5,1-3/2-1, 1-3/2-2,1-3/2-4, 1-3/2-5}
    \draw (a-\i) -- (a-\j);
\end{tikzpicture} \caption{Hasse diagram of $E(M^{S_3})$.} \label{fig-S3} \end{center} \end{figure}
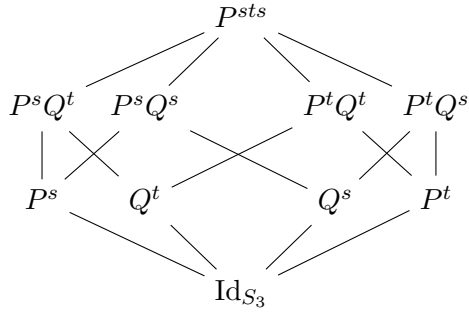

The following result, which is the content of \cite[Theorem 7.40]{OHara},  generalizes the notion of \emph{parabolic map}, introduced in \cite{billey}.
\begin{prop} \label{prop parabolic map}
  Let $(W,S)$ be a finite Coxeter system and $J \subseteq S$. Then the poset $uW_J \cap [u,v]$ has maximum, for all $u,v\in W^J$, $u \leqslant v$.
\end{prop}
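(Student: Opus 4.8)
The plan is to reduce the statement to the parabolic map of Billey--Fan--Losonczy: the assertion is precisely the coset version recorded in \cite[Theorem 7.40]{OHara}, and I would give a self-contained inductive proof of it. First I would rewrite the set in a convenient form. Since $u\in W^J$ is the minimal-length representative of its coset, a reduced word for $u$ is a prefix of a reduced word for $uw$, so $u\leq uw$ for every $w\in W_J$; hence $uW_J\cap[u,v]=\{uw: w\in W_J,\ uw\leq v\}$, all these elements lying automatically above $u$. Moreover the map $w\mapsto uw$ is an order isomorphism of $(W_J,\leq)$ onto the induced subposet $uW_J=[u,uw_0(J)]$ (this last identity being the interval-retract property of the special idempotent $P^J$, cf. Proposition \ref{prop interv} and Corollary \ref{corollario partizione}, and also classical). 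Thus it is equivalent to prove that the lower order ideal $D_v:=\{w\in W_J: uw\leq v\}$ of $W_J$ has a maximum; being finite, $D_v$ has a maximum if and only if it is upward directed, i.e. any two of its elements have a common upper bound inside $D_v$.

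Then I would establish the maximum of $D_v$ by induction, \emph{generalising} the statement to an arbitrary $v\in W$ (dropping the hypothesis $v\in W^J$) and inducting on $\ell(v)$. Choosing a right descent $s$ of $v$, so $vs<v$, and writing $v'=vs$, the lifting property of the Bruhat order (\cite[Proposition 2.2.7]{BB}) yields, for every $z\in W$, the equivalence $z\leq v\Leftrightarrow zs\leq v'$ when $zs<z$ and the equivalence $z\leq v\Leftrightarrow z\leq v'$ when $zs>z$; in particular $x\leq v$ forces $xs\leq v$. Using these I would compare $D_v$ with the ideal attached to $v'$ and transport the maximum provided by the inductive hypothesis. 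The comparison is transparent when $s$ is a right ascent of $u$ and $s\notin J$: then $u\leq v'$ (by lifting) and right multiplication by $s$ moves $uW_J$ off itself, so the two ideals differ in a controlled way. The remaining configurations --- when $s$ is a right descent of $u$ (so that $u\leq v'$ may fail) or when $s\in J$ (so that right multiplication by $s$ preserves the coset $uW_J$) --- require re-choosing the descent or combining right and left descents, and form the technical heart of the argument.

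The main obstacle is exactly this case analysis, and it is genuinely unavoidable: the Bruhat order on $W$ is not a lattice, so $\max D_v$ cannot be produced as a join of two of its elements but must be extracted from the $v$-bounded structure itself. This is the precise point where the special combinatorial features of the Bruhat order (the lifting/exchange property) are indispensable, and it is what the parabolic map of \cite{billey} and its coset extension \cite[Theorem 7.40]{OHara} supply. Once the maximum $m$ of $D_v$ is secured, $um$ is the desired maximum of $uW_J\cap[u,v]$, which proves the proposition; alternatively, for a streamlined presentation, one may simply invoke \cite[Theorem 7.40]{OHara} directly.
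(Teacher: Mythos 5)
The paper does not actually prove this proposition: it introduces it as ``the content of \cite[Theorem 7.40]{OHara}'' and leaves it at that. Your fallback option --- invoking that theorem directly --- therefore coincides exactly with what the paper does, and to that extent your proposal is acceptable. Your preliminary reductions are also sound: since $u\in W^J$, every element of $uW_J$ lies above $u$, the map $w\mapsto uw$ is an order isomorphism of $W_J$ onto $uW_J$, so the claim is equivalent to the finite nonempty lower order ideal $D_v=\{w\in W_J: uw\leq v\}$ of $W_J$ having a maximum.

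As a self-contained derivation, however, the proposal has a genuine gap, and it sits exactly where all the content of the parabolic-map theorem lies. The induction on $\ell(v)$ is sketched in only one configuration ($s\notin J$ a right ascent of $u$), and even that case is not ``transparent'': for $w\in W_J$ with $uws<uw$ the lifting property gives $uw\leq v\iff uws\leq vs$, and since $s\notin J$ the element $uws$ lies \emph{outside} the coset $uW_J$, so this equivalence does not directly compare $D_v$ with $D_{vs}$; some further argument is needed even here. The remaining configurations --- $s\in J$, or $s$ a right descent of $u$ --- are explicitly deferred as ``the technical heart of the argument'' and never supplied; re-choosing descents or mixing left and right descents is precisely where the known proofs of \cite{billey} and \cite[Theorem 7.40]{OHara} do their work. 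A proof attempt that identifies its hardest step and then points back to the literature for it is a citation, not a proof; if you intend the self-contained route you must actually carry out that case analysis, and if you intend the citation route you should present it as such from the start (which is what the paper does).
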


\begin{cor} \label{teorema proiezioni}
  Let $(W,S)$ be a finite Coxeter system and $J \subseteq S$. Then $P^J$ and $Q^J$ are
special projections.
\end{cor}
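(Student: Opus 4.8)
The plan is to exploit the fact, recorded just above the statement, that $P^J = P^{w_0(J)}$ is already a special idempotent, i.e. an element of $M^W$. By the definition of a special projection it therefore only remains to verify that $P^J$ is a \emph{projection} in the sense of Definition \ref{def proiezione}. Concretely, I would fix $x,y \in \imm(P^J) = W^J$ with $x \leqslant y$ and show that the set $[x,y]\cap P^J_x$ is an interval of $W$.

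The key input is the coset description $P^J_x = xW_J = [x, xw_0(J)]$ listed for $P^J$ in this section, which gives $[x,y]\cap P^J_x = xW_J \cap [x,y]$. Since $x$ is the minimum of the coset $xW_J$ and $x \in [x,y]$, this set has minimum $x$; and by Proposition \ref{prop parabolic map}, applied with $u = x$ and $v = y$ (both in $W^J$), the set $xW_J \cap [x,y]$ has a maximum, which I call $m$. So the intersection has both a minimum and a maximum, and it remains only to establish that it coincides with $[x,m]$.

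Convexity is the one point that needs to be noticed. Because $xW_J = [x, xw_0(J)]$ is itself an interval, it is convex in the Bruhat order: for any $z$ with $x \leqslant z \leqslant m$ one has $z \leqslant m \leqslant xw_0(J)$, hence $z \in xW_J$, and at the same time $z \leqslant m \leqslant y$, hence $z \in [x,y]$. Thus $[x,y]\cap P^J_x = [x,m]$ is an interval of $W$, so $P^J$ is a projection; being an element of $M^W$, it is a special projection. For $Q^J$ I would invoke left--right symmetry: the inverse map $w \mapsto w^{-1}$ is an automorphism of $(W,\leqslant)$ carrying $^JW$ onto $W^J$ and each left coset $W_J v$ onto the right coset $v^{-1} W_J$, so Proposition \ref{prop parabolic map} yields the corresponding left statement (that $W_J v \cap [v,y]$ has a maximum for $v,y \in {^JW}$), and the identical argument, now using $Q^J_v = W_J v = [v, w_0(J) v]$, shows that $Q^J$ is a special projection.

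I expect the genuinely substantive content to be entirely absorbed by Proposition \ref{prop parabolic map} (the existence of the parabolic-map maximum together with its coset extension): this is precisely the ingredient that guarantees the maximum $m$ exists. Granted that input, the only remaining observation—that a Bruhat coset $xW_J$ is an interval, hence convex, so that the intersection of two intervals with a common minimum is again an interval—is routine, and the left case follows formally by the inversion symmetry.
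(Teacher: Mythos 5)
Your proof is correct and follows essentially the same route as the paper: both reduce the statement to the fact that $P^J$ and $Q^J$ are special idempotents with fibers $uW_J$ and $W_Jv$, and then invoke Proposition \ref{prop parabolic map} (and its left analogue) for the existence of the maximum. The only difference is that you explicitly supply the convexity step showing $[x,y]\cap xW_J=[x,m]$, which the paper leaves implicit.
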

\begin{proof}
  We have already observed that $P^J$ and $Q^J$ are special idempotents. Moreover $P^J_u=uW_J$ and $Q^J_v=W_Jv$,
for all $u\in W^J$, $v\in {^J}W$. Hence the result follows by Proposition \ref{prop parabolic map} and its left formulation.
\end{proof}

By Theorem \ref{immagine graduata} and Corollary \ref{teorema proiezioni} we deduce, in the finite case, the well known result that $W^J$ and ${^JW}$, with the induced Bruhat order, are graded. By Theorem \ref{complemento graduato} we also obtain that their complements are graded.
By Proposition \ref{moebius proiezioni}, we recover the formula for the M\"obius function of $W^J$ and ${^JW}$.
We also obtain the following result concerning double quotients.

\begin{prop}
  Let $(W,S)$ be a finite Coxeter system and  $I,J \subseteq S$ be such that $Q^IP^J$ is a projection. Then ${^IW^J}$, ordered by inducing the Bruhat order, is graded with
rank function $\ell$ and
$$\mu_{{^IW^J}}(u,v)=\left\{
                         \begin{array}{ll}
                           (-1)^{\ell(u,v)}, & \hbox{if $[u,v] \subseteq {^IW^J}$;} \\
                           0, & \hbox{otherwise,}
                         \end{array}
                       \right.
$$ for all $u,v\in {^IW^J}$, $u\leqslant v$.
\end{prop}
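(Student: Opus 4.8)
The plan is to obtain this proposition as an immediate consequence of Theorem~\ref{immagine graduata} and Proposition~\ref{moebius proiezioni}, applied to the poset $K=W$ and the idempotent $P=Q^IP^J$. No genuinely new argument is needed; the content lies entirely in verifying that the hypotheses of those two results hold in the present situation, after which both conclusions transfer verbatim.

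First I would record the three facts already established in the excerpt that make the two results applicable. The poset $(W,\leqslant)$ with the Bruhat order is graded with rank function $\ell$ and is Eulerian, so the Eulerian hypothesis shared by Theorem~\ref{immagine graduata} and Proposition~\ref{moebius proiezioni} is satisfied with $\rho=\ell$. By assumption $Q^IP^J$ is a projection, and it lies in $\Or(W)$ (indeed it is a special idempotent). Finally, $\imm(Q^IP^J)={^IW^J}$, so the image whose structure we wish to describe is precisely the double quotient ${^IW^J}$ regarded as an induced subposet of $W$, and the Bruhat order induced on ${^IW^J}$ is exactly the induced-subposet order on $\imm(Q^IP^J)$.

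Then I would simply invoke the two results. Applying Theorem~\ref{immagine graduata} with $K=W$ and $P=Q^IP^J$ gives that $\imm(Q^IP^J)={^IW^J}$ is graded with rank function $\rho=\ell$, which is the first assertion. Applying Proposition~\ref{moebius proiezioni} with the same $K$ and $P$ yields, for all $u,v\in {^IW^J}$ with $u\leqslant v$, the value $\mu_{{^IW^J}}(u,v)=(-1)^{\ell(u,v)}$ when $[u,v]\subseteq {^IW^J}$ and $0$ otherwise, upon substituting $\imm(P)={^IW^J}$ and $\rho(u,v)=\ell(u,v)$. This is exactly the stated formula.

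The only point requiring care — and the closest thing to an obstacle — is confirming that $\imm(Q^IP^J)$ coincides with ${^IW^J}$ as an induced subposet and that the restriction of the global rank function $\ell$ serves as the rank function on it; but both facts are already supplied by the earlier discussion of Coxeter quotients, so the proof reduces to citing Theorem~\ref{immagine graduata} and Proposition~\ref{moebius proiezioni} together with the standing hypothesis that $Q^IP^J$ is a projection.
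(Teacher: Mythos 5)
Your proposal is correct and matches the paper's (implicit) argument exactly: the proposition is deduced by applying Theorem~\ref{immagine graduata} and Proposition~\ref{moebius proiezioni} to the Eulerian poset $(W,\leqslant)$ with rank function $\ell$ and the projection $Q^IP^J$, whose image is ${^IW^J}$. The hypothesis checks you list (Eulerianity of $W$, $Q^IP^J\in\Or(W)$, $\imm(Q^IP^J)={^IW^J}$) are precisely the facts the paper relies on.
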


The following example shows that $\imm(P)$ could be graded, $P$ being a special idempotent but not a projection.
\begin{ex} \label{esempio non proiezione graduato}
  Let $S_3$ be the symmetric group of order $6$ generated by $\{s,t\}$. Then $P:=P^sQ^t$ is a special idempotent,
  $P_e=\{e,s,t,ts\}$ and $\imm(P)=\{e,st\}$. Then $P_e \cap [e,st]= \{e,s,t\}$;
  this set is not an interval. Hence $P$ is not a projection. Notice that $\imm(P)$ is graded, but the rank function is not $\ell$.
\end{ex}
In the next example we give all the special projections for the symmetric groups $S_2$, $S_3$, $S_4$, $S_5$ and $S_6$, with their standard Coxeter presentations.
The computations were performed with SageMath.
\begin{ex} \label{esempi proiezioni Sn}
  Let $1 \leqslant n \leqslant 5$. If $(S_{n+1},[n])$ is a Coxeter system of type $A_n$ and $I,J \in \mathcal{P}([n])\setminus \{\varnothing,[n]\}$,
  then $Q^IP^J$ is a special projection if and only if $I\cap J\in \{I,J\}$ and
$$(I,J) \in \left(\bigcup\limits_{k=1}^n  \left\{[k],[k,n]\right\}\right) \times \left(\bigcup\limits_{k=1}^n  \left\{[k],[k,n]\right\}\right),$$
where, if $a,b\in \N$ and $a \leqslant b$, we define $[a,b]:=\{z \in \N: a \leqslant z \leqslant b\}$.
\end{ex}

\subsection{Involutions and special projections} \label{sezione involuzioni}

Let $(W,S)$ be a finite Coxeter system and $\mathrm{Invol}(W):=\{w\in W: w=w^{-1}\}$. The set
$\mathrm{Invol}(W)$ with the induced Bruhat order is a graded poset; its rank function is
$r:=(\ell+a\ell)/2$, where $a\ell$ is the absolute length function  (see \cite[Exercise 2.35]{BB}).
We denote by $[u,v]^{\mathcal{I}}$ any Bruhat interval of $\mathrm{Invol}(W)$.
\begin{prop}
Let $J\subseteq S$ and $\mathrm{Invol}^J(W):=\mathrm{Invol}(W) \cap {^J}W^J$. Then
\begin{enumerate}
  \item $Q^JP^J : W \rightarrow {^J}W^J$ restricts to $Q^JP^J : \mathrm{Invol}(W) \rightarrow \mathrm{Invol}^J(W)$ and
  \item $\hat{P}^J:=Q^JP^J : \mathrm{Invol}(W) \rightarrow \mathrm{Invol}^J(W)$ is a special idempotent.
\end{enumerate}
\end{prop}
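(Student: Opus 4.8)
The plan is to prove the two assertions separately, reducing the second to a fixed-point computation for the matchings $M_s$ of Example \ref{esempio involuzioni} combined with a uniqueness principle for regressive idempotents.

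\textbf{Part (1).} Since $\imm(Q^JP^J)={}^JW^J$, every value $Q^JP^J(w)$ already lies in ${}^JW^J$, so the point is to see it is an involution whenever $w$ is. I would use that the fiber of $Q^JP^J$ over $d:=Q^JP^J(w)$ is the double coset $(Q^JP^J)_d=W_JwW_J$, of which $d$ is the unique element of minimal length. If $w=w^{-1}$, inversion $g\mapsto g^{-1}$ carries $W_JwW_J$ onto $W_Jw^{-1}W_J=W_JwW_J$; being length preserving, it permutes the double coset and therefore fixes its unique shortest element. Hence $d^{-1}=d$, so $d\in\mathrm{Invol}(W)\cap{}^JW^J=\mathrm{Invol}^J(W)$, which is exactly the claimed restriction.

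\textbf{Part (2).} Write $\hat{P}^J$ for this restriction. As the restriction to the induced subposet $\mathrm{Invol}(W)$ of the regressive, order preserving idempotent $Q^JP^J\in\Or(W)$, the map $\hat{P}^J$ is again regressive, order preserving and idempotent, i.e. $\hat{P}^J\in\Or(\mathrm{Invol}(W))$, and by Part (1) its image is $\mathrm{Invol}^J(W)$. To show it is a \emph{special} idempotent I would realize it inside $M^{\mathrm{Invol}(W)}$ using the special matchings $M_s$, $s\in J$, of Example \ref{esempio involuzioni} (with $\theta=\id_W$). The first step is to compute the fixed points of each $P^{M_s}$: for an involution $v$ and $s\in S$ one has either $sv=vs$, with $M_s(v)=vs$, or $sv\neq vs$ with $\ell(svs)=\ell(v)\pm2$, with $M_s(v)=svs$; in every case a short case analysis on these lengths yields that $P^{M_s}(v)=v$ if and only if $\ell(sv)>\ell(v)$, that is, if and only if $v\in\mathrm{Invol}(W)\cap{}^sW^s$.

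The second step assembles the generators. I would set $P:=\prod_{s\in J}P^{M_s}$ and let $T:=P^N$ be an idempotent power, which exists because the monoid $\langle P^{M_s}:s\in J\rangle$ is finite; then $T\in M^{\mathrm{Invol}(W)}$ is a product in which each $P^{M_s}$, $s\in J$, occurs. By Lemma \ref{prop punti fissi} the fixed-point set of $T$, which equals $\imm(T)$, is the common fixed-point set $\bigcap_{s\in J}(\mathrm{Invol}(W)\cap{}^sW^s)=\mathrm{Invol}^J(W)$. I would then invoke the following uniqueness principle: if $P,T\in\Or(\mathrm{Invol}(W))$ are idempotent with $\imm(P)=\imm(T)=V$, then $P(v)=\max\{u\in V:u\leqslant v\}$ for every $v$, since $P(v)\in V$, $P(v)\leqslant v$, and any $u\in V$ with $u\leqslant v$ satisfies $u=P(u)\leqslant P(v)$ by order preservation; thus $P$ is determined by $V$ alone, whence $P=T$. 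Applying this to $T$ and $\hat{P}^J$, which share the image $\mathrm{Invol}^J(W)$, gives $\hat{P}^J=T\in M^{\mathrm{Invol}(W)}$, so $\hat{P}^J$ is a special idempotent. The main obstacle is the fixed-point computation of the first step, which rests on the dichotomy $\ell(svs)=\ell(v)\pm2$ for an involution $v$ with $sv\neq vs$ (the length-preserving case never occurring); this is the standard length behavior underlying the twisted involution order, which I would quote from Hultman's work cited in Example \ref{esempio involuzioni}. The remaining ingredients — that $d$ is the unique minimal-length element of its double coset, and the uniqueness principle above — are elementary, the latter following directly from regressivity and order preservation.
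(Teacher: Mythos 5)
Your proof is correct, but it takes a genuinely different route from the paper's. The paper proves both claims at once by an explicit factorization: it first observes the pointwise identity $P^{M_s}=P^sQ^s$ on $\mathrm{Invol}(W)$ (for $\theta=\id_W$), then writes $Q^JP^J=Q^{w_0(J)}P^{w_0(J)}=Q^{s_1}P^{s_1}\cdots Q^{s_k}P^{s_k}=P^{M_{s_1}}\cdots P^{M_{s_k}}$ for a reduced word $s_1\cdots s_k$ of $w_0(J)$, using the $0$-Hecke factorization and the commutation of left and right multiplications. This single identity shows simultaneously that involutions are sent to involutions and that the restriction is a product of generators of $M^{\mathrm{Invol}(W)}$, hence a special idempotent. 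You instead prove part (1) by a symmetry argument (inversion is a length-preserving bijection of the inversion-stable double coset $W_JwW_J$, so it fixes the unique minimal-length element $Q^JP^J(w)$), and part (2) by computing $\mathrm{Fix}(P^{M_s})=\mathrm{Invol}(W)\cap{}^sW^s$, forming an idempotent power $T$ of $\prod_{s\in J}P^{M_s}$ with $\imm(T)=\mathrm{Invol}^J(W)$ via Lemma \ref{prop punti fissi}, and then invoking the observation that an idempotent of $\Or(K)$ is determined by its image. All the ingredients you use are sound: the fiber description $(Q^JP^J)_d=W_JdW_J$ with minimum $d$ appears in the paper, the dichotomy $\ell(svs)=\ell(v)\pm2$ for an involution $v$ with $sv\neq vs$ is standard, and your uniqueness principle ($P(v)=\max\{u\in\imm(P):u\leqslant v\}$ for idempotent $P\in\Or(K)$) is a correct and genuinely useful general fact that the paper never states explicitly. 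What the paper's approach buys is brevity and an explicit reduced-word factorization of $\hat{P}^J$; what yours buys is independence from the identity $P^{M_s}=P^sQ^s$ and from the commutation argument, at the cost of importing the double-coset and length facts and the extra uniqueness lemma.
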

\begin{proof} We observe that, if $P^{M_s}$ is the idempotent defined in Example \ref{esempio involuzioni}, then $P^{M_s}=P^sQ^s$,
whenever $\theta=\id_W$.
Let $u \in \mathrm{Invol}(W)$ and $s_1 \cdots s_k=w_0(J)$ be a reduced word. Therefore
\begin{eqnarray*}
  Q^JP^J(u) &=& Q^{w_0(J)}P^{w_0(J)}(u)=Q^{s_1}P^{s_1} \cdots Q^{s_k}P^{s_k}(u) \\
 &=& P^{M_{s_1}}\cdots P^{M_{s_k}}(u) \in \mathrm{Invol}^J(W).\end{eqnarray*}\end{proof}
In the next proposition we prove that special projections in $\Or(W)$ of kind $Q^JP^J$ restrict to special projections in $\Or(\mathrm{Invol}(W))$.
\begin{prop} \label{prop proiezioni invol}
Let $(W,S)$ be a Coxeter system and $J \subseteq S$.
If $Q^JP^J \in M^W$ is a projection, then $\hat{P}^J \in M^{\mathrm{Invol}(W)}$ is a projection.
\end{prop}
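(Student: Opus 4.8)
The plan is to deduce the projection property of $\hat{P}^J$ on $\mathrm{Invol}(W)$ directly from the assumed projection property of $Q^JP^J$ on $W$, the bridge being an inversion-symmetry argument that keeps the relevant maximal element inside $\mathrm{Invol}(W)$. By the previous proposition, $\hat{P}^J \in M^{\mathrm{Invol}(W)}$ is a special idempotent with $\imm(\hat{P}^J) = \mathrm{Invol}^J(W)$; so, by Definition \ref{def proiezione}, it remains only to show that $[x,y]^{\mathcal{I}} \cap (\hat{P}^J)_x$ is an interval of $\mathrm{Invol}(W)$ for all $x,y \in \mathrm{Invol}^J(W)$ with $x \leqslant y$.

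First I would unwind the fibers and intervals. Since the order on $\mathrm{Invol}(W)$ is induced from $W$ and $\hat{P}^J$ is the restriction of $Q^JP^J$, writing $[x,y]$ for the interval in $W$ one has
\[ [x,y]^{\mathcal{I}} \cap (\hat{P}^J)_x = \left( [x,y] \cap (Q^JP^J)_x \right) \cap \mathrm{Invol}(W). \]
By hypothesis $Q^JP^J$ is a projection of $W$, so $[x,y] \cap (Q^JP^J)_x$ is an interval $[x,w]$ of $W$, its minimum being $x$ because $\min (Q^JP^J)_x = \{x\}$.

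The crucial step, and the main obstacle, is to prove that the top element $w$ is itself an involution; granting this, $[x,w] \cap \mathrm{Invol}(W) = [x,w]^{\mathcal{I}}$ is an interval of $\mathrm{Invol}(W)$, and the displayed identity yields $[x,y]^{\mathcal{I}} \cap (\hat{P}^J)_x = [x,w]^{\mathcal{I}}$, finishing the proof. To obtain $w = w^{-1}$ I would use the well-known fact (immediate from the subword property) that inversion $u \mapsto u^{-1}$ is an order automorphism of $(W,\leqslant)$ fixing $\mathrm{Invol}(W)$ pointwise. Since $x$ and $y$ are involutions, the interval $[x,y]$ is inversion-invariant; and since $(Q^JP^J)_x = W_J x W_J$ with $x = x^{-1}$, the double coset is inversion-invariant too. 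Hence $[x,w] = [x,y] \cap (Q^JP^J)_x$ is inversion-invariant, which forces $[x,w] = [x^{-1}, w^{-1}] = [x, w^{-1}]$; as an interval determines its maximum, $w = w^{-1}$, as wanted. Everything else is bookkeeping: I would only double-check that the double-coset description of $(Q^JP^J)_x$ and the identity $\min (Q^JP^J)_x = \{x\}$ are in force, which they are from the Coxeter-group preliminaries and from the general remark on idempotents of $\Or(K)$.
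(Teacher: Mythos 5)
Your proposal is correct and follows essentially the same route as the paper: both arguments reduce to showing that the maximum $w$ of $[x,y]\cap W_JxW_J$ is itself an involution, using that inversion is an order automorphism of $(W,\leqslant)$ preserving both $[x,y]$ and the double coset $W_JxW_J$ when $x,y$ are involutions. The paper phrases this as $w^{-1}$ being a maximal element of the same interval (hence equal to $w$), while you phrase it as inversion-invariance of the interval $[x,w]$; these are the same argument.
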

\begin{proof} Recall that $u \leqslant v$ if and only if $u^{-1} \leqslant v^{-1}$, for all $u,v\in W$.
Let $Q^JP^J \in M^W$ be a projection and $z_{x,y}:=\max([x,y]\cap W_JxW_J)$, for all $x,y\in {^J}W^J$,
$x \leqslant y$.
Let $u,v \in \mathrm{Invol}^J(W)$ be such that $u \leqslant v$.
Then $u \leqslant z_{u,v}^{-1} \leqslant v$ and $Q^JP^J(z_{u,v}^{-1})=u^{-1}=u$;
hence $z_{u,v}^{-1} \in \max([u,v]\cap W_JuW_J)$, i.e. $z_{u,v}^{-1}=z_{u,v}$. This implies that
 $z_{u,v}$ is the maximum of $[u,v]^{\mathcal{I}}\cap \hat{P}^J_u$.
\end{proof}

The parabolic map $m: W \times \mathcal{P}(S) \rightarrow W$ is defined in \cite{billey}
by $$m(w,J)=\max\{v\in W_J: v \leqslant w\},$$ for all $(w,J) \in W \times \mathcal{P}(S)$. This function
restricts
to a parabolic map $$\hat{m}:\mathrm{Invol}(W)\times \mathcal{P}(S) \rightarrow \mathrm{Invol}(W).$$
as one can easily prove (see the proof of Proposition \ref{prop proiezioni invol}).
Nevertheless, an analogous of Proposition \ref{prop parabolic map} does not holds for involutions; the special projections of kind $\hat{P}^J$, for $\mathrm{Invol}(S_n)$, $n<8$, are given in the example below. The computations were performed with SageMath.
\begin{ex} \label{esempi proiezioni involuzioni Sn}
  Let $1 \leqslant n \leqslant 6$. If $(S_{n+1},[n])$ is a Coxeter system of type $A_n$ and $J \in \mathcal{P}([n])\setminus \{\varnothing,[n]\}$,
  then $\hat{P}^J$ is a projection if and only if
$$J \in \bigcup\limits_{k=1}^n  \left\{\{k\},[k],[k,n]\right\}.$$
\end{ex}

\section{Special partial idempotents} \label{partial}

Some of the results of Section \ref{sezione special} can be proved for the idempotents arising from \emph{special partial matchings}, a weaker notion of special matching introduced in \cite{Hultman1}. We recall the definition.
A function $M_p: K \rightarrow K$ is a \emph{partial matching}\footnote{In graph theory this is just a matching of the Hasse diagram satisfying the condition $M_p(\hat{1}) \vartriangleleft \hat{1}$.} if
\begin{enumerate}
  \item $M_p \circ M_p = \id_K$;
  \item $M_p(\hat{1}) \vartriangleleft \hat{1}$;
  \item $M_p(x) \vartriangleleft x$ or $x \vartriangleleft M_p(x)$ or $M_p(x)=x$, for all $x\in K$.
\end{enumerate}
A partial matching $M_p: K \rightarrow K$ is a \emph{partial special matching} if $x \vartriangleleft y$ and $x \neq M_p(y)$ implies $M_p(x) \leqslant M_p(y)$, for all $x,y\in K$.
We refer to \cite{Hultman2} and \cite{marietti-pirconi} for recent developments.
We let $\mathrm{m}_p(K)$ be the set of partial matchings of $K$ and $\sm_p(K)$ the one of partial special matchings. As for special matching,
a special partial matching satisfies a lifting property (see \cite[Lemma 5.2]{Hultman2}). Also the converse is true; as one can readily check,
a partial matching $M$ which satisfies the lifting property and the condition $M(\hat{1})\vartriangleleft \hat{1}$, is a special partial matching. We resume our observation in the following proposition.

\begin{prop}
  Let $M_p \in \mathrm{m}_p(K)$. Then $M_p \in \sm_p(K)$ if and only if $M_p$ satisfies the lifting property, i.e. $x\leqslant y$, $x \leqslant M_p(x)$ and $M_p(y) \leqslant y$ implies
$M_p(x) \leqslant y$ and $x \leqslant M_p(y)$, for all $x,y \in K$.
\end{prop}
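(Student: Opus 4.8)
The plan is to prove the two implications separately, citing the forward direction and carrying out the converse in full, since the latter is the new content.

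For the forward implication, that every $M_p \in \sm_p(K)$ satisfies the lifting property, I would simply invoke \cite[Lemma 5.2]{Hultman2}. If a self-contained argument were preferred, one can reprove it by induction on $\rho(x,y)$ along a saturated chain from $x$ to $y$, exactly as in \cite[Lemma 4.2]{brenti-cohomology}, feeding the defining inequality $M_p(x) \leqslant M_p(y)$ (valid for $x \vartriangleleft y$, $x \neq M_p(y)$) into the inductive step.

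For the converse I would assume that $M_p \in \mathrm{m}_p(K)$ satisfies the lifting property and verify the defining condition of $\sm_p(K)$ directly. So I fix $x,y \in K$ with $x \vartriangleleft y$ and $x \neq M_p(y)$, aiming at $M_p(x) \leqslant M_p(y)$, and split into four cases according to whether $M_p(x) \vartriangleleft x$ or $x \leqslant M_p(x)$, and whether $M_p(y) \leqslant y$ or $y \vartriangleleft M_p(y)$; these exhaust all possibilities, since a partial matching keeps $M_p(x)$ comparable to $x$ and $M_p(y)$ comparable to $y$. When $M_p(x) \vartriangleleft x$ and $y \vartriangleleft M_p(y)$, the chain $M_p(x) < x < y < M_p(y)$ gives the conclusion by transitivity. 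When $x \leqslant M_p(x)$ and $y \vartriangleleft M_p(y)$, I apply the lifting property to the pair $(x, M_p(y))$: its hypotheses $x \leqslant M_p(x)$ and $M_p(M_p(y)) = y \leqslant M_p(y)$ hold (here the involutivity $M_p \circ M_p = \id_K$ is what lets me read $M_p(M_p(y))$ as $y$), and the first conclusion is exactly $M_p(x) \leqslant M_p(y)$. Symmetrically, when $M_p(x) \vartriangleleft x$ and $M_p(y) \leqslant y$, I apply the lifting property to the pair $(M_p(x), y)$, whose hypotheses $M_p(x) \leqslant M_p(M_p(x)) = x$ and $M_p(y) \leqslant y$ hold, and whose second conclusion is $M_p(x) \leqslant M_p(y)$.

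The remaining case, $x \leqslant M_p(x)$ and $M_p(y) \leqslant y$, is the only delicate one. Applying the lifting property to $(x,y)$ yields $M_p(x) \leqslant y$ and $x \leqslant M_p(y)$. If $M_p(y) \vartriangleleft y$, then $\rho(M_p(y)) = \rho(y) - 1 = \rho(x)$, so $x \leqslant M_p(y)$ together with the equality of ranks forces $x = M_p(y)$, contradicting $x \neq M_p(y)$; hence $M_p(y) = y$, and then $M_p(x) \leqslant y = M_p(y)$ finishes the case. The main obstacle, such as it is, is precisely this rank count ruling out $M_p(y) \vartriangleleft y$; everywhere else the proof is a mechanical application of the lifting property to the correctly chosen pair, the only technical care being to use $M_p \circ M_p = \id_K$ to identify $M_p(M_p(\cdot))$ with the original element. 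This matches the assertion in the text that the converse can be readily checked.
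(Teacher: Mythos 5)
Your proof is correct. The paper gives no argument for this proposition (it cites \cite[Lemma 5.2]{Hultman2} for the forward direction and declares the converse ``readily checked''), and your four-case verification --- including the rank count that rules out $M_p(y)\vartriangleleft y$ in the case $x\leqslant M_p(x)$, $M_p(y)\leqslant y$ --- correctly supplies exactly the routine check the paper has in mind.
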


If $M_p$ is a special partial matching of $K$, then we can define a \emph{special partial idempotent} $P^{M_p}: K \rightarrow K$ by letting $$P^{M_p}(x)= \left\{
          \begin{array}{ll}
            x, & \hbox{if $x \leqslant M_p(x)$;} \\
            M_p(x), & \hbox{if $M_p(x)\vartriangleleft x$,}
          \end{array}
        \right.
$$ for all $x\in K$. Then $P^{M_p}$ is regressive and order preserving, as one can see as in Proposition \ref{morfismo}.
Therefore, if we denote by $M_p^K$ the monoid generated by the set of idempotents $\{\id_K\}\cup\{P^M:M \in \sm_p(K)\}$, we have
$$M^K \subseteq M_p^K \subseteq \Or(K).$$

\begin{oss}
Let $M_p \in \sm_p(K)$. Since $P^{M_p} : K \rightarrow K$ is a poset morphism with small fibers (see \cite[Chapter 11]{kozlov}),
then $M_p$ is acyclic by \cite[Theorem 11.4]{kozlov}.
\end{oss}

\begin{ex} \label{esempio catena parziale}
  Let $n>1$ and $c_n$ be the chain of $n$ elements. For $n\geqslant 3$, the cardinality $|\sm_p(c_n)|$ is the number of matchings (Hosoya index) of the path graph $c_{n-2}$. It is not difficult to see that $|\sm_p(c_n)|=F_{n-1}$, for all $n\geqslant 2$, where $F_n$ is the $n$-th Fibonacci number.
As monoids, $M_p^{c_n} \simeq \{\id_{c_n}\}\cup \Or(c_{n-2}) $, for all $n \geqslant 3$. In fact, we can write $M_p^{c_n}=\{\id_{c_n}\}\cup\{fg:g\in \Or(c_{n-2})\}$, where $f: c_n \rightarrow c_n$ is the idempotent function  defined by setting $f(\hat{1}) \vartriangleleft \hat{1}$ and
$f(x)=x$ for all $x \leqslant f(\hat{1})$.
\end{ex}

\begin{thm} \label{teorema cartesiano parziale}
  Let $K_1,\ldots,K_n$ be finite graded posets and $K:= K_1 \times \ldots \times K_n$.
Then $M\in \sm_p(K)$ if and only if $$M=\id_{K_1}\times \ldots \times \id_{K_{i-1}} \times N \times  \id_{K_{i+1}} \times \ldots \times \id_{K_n},$$ for some $i\in [n]$, $N \in \sm_p(K_i)$.
\end{thm}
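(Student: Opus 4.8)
The plan is to reduce to the case $n=2$ exactly as in Theorem \ref{teorema cartesiano}, since the general statement follows by an obvious induction once the two-factor case is settled. So I would set $K=K_1\times K_2$ and let $M\in\sm_p(K)$. The reverse implication is straightforward: one checks directly that a product $\id_{K_1}\times N$ (or $N\times\id_{K_2}$) with $N$ a special partial matching inherits all three defining conditions of a partial matching together with the lifting property, hence is a special partial matching of $K$. The forward implication is where the work lies, and I would try to mirror the structure of the proof of Theorem \ref{teorema cartesiano} as closely as possible, watching for the two new features: fixed points are now allowed ($M_p(x)=x$), and there is the extra boundary condition $M_p(\hat{1})\vartriangleleft\hat{1}$.

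The first step is to establish, as before, the stability claim that $M(\hat{0}_1,\hat{0}_2)\in K_1\times\{\hat{0}_2\}$ implies $M(K_1\times\{\hat{0}_2\})\subseteq K_1\times\{\hat{0}_2\}$. I would run the same minimal-counterexample induction on $x\in K_1$: writing $(a,b):=M(x,\hat{0}_2)$, the partial-matching condition now gives three cases at the bottom, namely $a\neq x,\ b=\hat{0}_2$, or $a=x,\ b\vartriangleright\hat{0}_2$, or the new fixed-point case $(a,b)=(x,\hat{0}_2)$. The fixed-point case is immediately compatible with the claim, so only the case $a=x,\ b\vartriangleright\hat{0}_2$ needs the covering/lifting argument of Theorem \ref{teorema cartesiano}, which I expect to go through verbatim and yield a contradiction. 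Having fixed which factor $M$ acts on at the bottom, I define $M_1:K_1\to K_1$ by $M(x,\hat{0}_2)=(M_1(x),\hat{0}_2)$ and verify $M_1\in\sm_p(K_1)$ — here the condition $M_1(\hat{1}_1)\vartriangleleft\hat{1}_1$ must be checked, which should follow by restricting the global condition $M(\hat{1})\vartriangleleft\hat{1}$ once I know $M$ acts only in the first factor.

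The propagation step $M(x,y)=(M_1(x),y)$ for all $(x,y)$ proceeds by induction on $(x,y)$, again splitting on whether $(x,y)$ covers $M(x,y)$, is covered by it, or equals it. The covered/covering subcases transcribe directly from Theorem \ref{teorema cartesiano}; the genuinely new subcase is $M(x,y)=(x,y)$, i.e. a fixed point, and I expect this to be the main obstacle. The hope is that $M(x,y)=(x,y)$ forces $M_1(x)=x$ by a lifting argument: choosing $z\vartriangleleft y$ and using $M(x,z)=(M_1(x),z)$ from the inductive hypothesis, together with the lifting property applied to $(x,z)\leqslant(x,y)$, should pin down $M_1(x)=x$ so that the formula $M(x,y)=(M_1(x),y)$ holds. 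If instead $M_1(x)\vartriangleleft x$ or $x\vartriangleleft M_1(x)$, the same lifting comparisons as in the two non-fixed subcases of Theorem \ref{teorema cartesiano} should produce a contradiction with $M(x,y)=(x,y)$. Finally, combining this with the dichotomy $M(\hat{0}_1,\hat{0}_2)\in K_1\times\{\hat{0}_2\}$ or $M(\hat{0}_1,\hat{0}_2)\in\{\hat{0}_1\}\times K_2$ yields that $M$ is $M_1\times\id_{K_2}$ or $\id_{K_1}\times M_2$, completing the proof. The delicate point throughout is ensuring the fixed-point alternative never breaks the "acts in a single factor" conclusion, and that the boundary condition on $\hat{1}$ is correctly inherited by the restricted matching.
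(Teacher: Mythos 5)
Your overall strategy (reduce to $n=2$, determine the factor in which $M$ acts from its value at a distinguished element, then propagate by induction) is the right one, but you have anchored the argument at the wrong end of the poset, and this creates a genuine gap. For a special \emph{partial} matching the only element that is guaranteed not to be fixed is the maximum, since the definition requires $M(\hat{1})\vartriangleleft\hat{1}$; nothing prevents $M(\hat{0})=\hat{0}$. When that happens your dichotomy ``$M(\hat{0}_1,\hat{0}_2)\in K_1\times\{\hat{0}_2\}$ or $M(\hat{0}_1,\hat{0}_2)\in\{\hat{0}_1\}\times K_2$'' is degenerate (both alternatives hold) and tells you nothing about which factor $M$ acts in. Worse, the map $M_1$ you extract from the bottom slice need not lie in $\sm_p(K_1)$: take $M=\id_{K_1}\times N$ with $N\in\sm_p(K_2)$ fixing $\hat{0}_2$; then $M$ fixes all of $K_1\times\{\hat{0}_2\}$, your $M_1$ is $\id_{K_1}$, which fails the condition $M_1(\hat{1}_1)\vartriangleleft\hat{1}_1$, and your upward propagation would be trying to prove the false statement $M=\id_K$. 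You flag the verification of $M_1(\hat{1}_1)\vartriangleleft\hat{1}_1$ as something that ``should follow once I know $M$ acts only in the first factor,'' but that is circular: you need $M_1\in\sm_p(K_1)$, or at least a genuine dichotomy at the anchor, before you can conclude which factor $M$ acts in.

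The fix is what the paper does: anchor at $\hat{1}=(\hat{1}_1,\hat{1}_2)$, where $M(\hat{1})\vartriangleleft\hat{1}$ forces exactly one coordinate of $M(\hat{1})$ to move, prove the stability claim $M(K_1\times\{\hat{1}_2\})\subseteq K_1\times\{\hat{1}_2\}$ for the \emph{top} slice, define $M_1$ by $M(x,\hat{1}_2)=(M_1(x),\hat{1}_2)$ (so that $M_1(\hat{1}_1)\vartriangleleft\hat{1}_1$ is inherited directly from $M(\hat{1})\vartriangleleft\hat{1}$), and propagate \emph{downward}, i.e.\ by induction on $(x,y)$ assuming $M(u,v)=(M_1(u),v)$ for all $(u,v)>(x,y)$. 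The new fixed-point case is then handled exactly where you expected difficulty, but using covers \emph{above} $(x,y)$: if $M_1(x)=x$ and one supposes $M(x,y)<(x,y)$, comparing $M(x,y)$ with the already-determined values $M(z,u)$ and $M(v,w)$ for suitable elements above $(x,y)$ yields a contradiction via the lifting property. Your instinct about where the new difficulty lies (the fixed-point alternative) is correct; the missing idea is that the boundary condition at $\hat{1}$ is not a side condition to be checked at the end but the indispensable starting point of the whole induction.
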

\begin{proof} It is sufficient to prove the result for $n=2$. Let $K=K_1\times K_2$ and $\hat{1}=(\hat{1}_1,\hat{1}_2)=\max K$.
We assert that $M(\hat{1}_1,\hat{1}_2)\in K_1 \times \{\hat{1}_2\}$ implies $M(K_1 \times \{\hat{1}_2\}) \subseteq K_1 \times \{\hat{1}_2\}$.
The proof is similar to the one for Theorem \ref{teorema cartesiano} and we omit it.

Let $M_1: K_1 \rightarrow K_1$ be the function defined by $M(x,\hat{1}_2)=(M_1(x),\hat{1}_2)$, for all $x\in K_1$; then $M_1\in \sm_p(K_1)$.
We prove that $M(x,y)=(M_1(x),y)$ for all $x,y$.
Let $(x,y)\in K$ be such that $y<\hat{1}_2$ and $M(u,v)= (M_1(u),v)$ for all $(u,v)>(x,y)$.
We consider only the case $M_1(x)=x$ and we prove that $M(x,y)=(x,y)=(M_1(x),y)$, the other cases being routinary.
It is easy to see that $M_1(x)=x$ implies $M(x,y) \leqslant (x,y)$. Let $M(x,y)<(x,y)$. We have $x<\hat{1}_1$, otherwise $M_1(x)<x$. Let $z \vartriangleright x$. If $M(x,y)=(x,u)$, $u \vartriangleleft y$, then $(x,y) \vartriangleleft M(z,u)=(M_1(z),u)$, a contradiction. Let $M(x,y)=(v,y)$, $v \vartriangleleft x$, and consider $w\in K_2$ such that $w \vartriangleright y$. Therefore $(v,y) \vartriangleleft (v,w)$ and $(x,y) \vartriangleleft M(v,w)$; hence $M(v,w)=(x,w)$, which implies $M_1(x)=v$, again a contradiction.
Therefore $M(x,y)=(x,y)$ and then $(M_1(x),y)=(x,y)=M(x,y)$.

Since either $M(\hat{1}_1,\hat{1}_2)\in K_1 \times \{\hat{1}_2\}$ or $M(\hat{1}_1,\hat{1}_2)\in \{\hat{1}_1\} \times K_2$, we have proved one implication in our statement.
The reverse implication is straightforward.
\end{proof}

\begin{cor} \label{corollario prodotto parziale}
   Let $K_1,\ldots,K_n$ be finite graded posets and $K:= K_1 \times \ldots \times K_n$.
Then, as monoids, $M_p^K \simeq M_p^{K_1}\times \ldots \times M_p^{K_n}$.
\end{cor}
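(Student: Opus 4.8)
The plan is to exhibit the isomorphism concretely through the coordinate-wise embedding of monoids and to check that it carries generators to generators. First I would recall the monomorphism
$$\Phi : \Or(K_1) \times \ldots \times \Or(K_n) \hookrightarrow \Or(K), \qquad (f_1,\ldots,f_n) \mapsto f_1 \times \ldots \times f_n,$$
already noted after Theorem \ref{teorema cartesiano}: it is injective because a product of functions determines each of its components, and it is a monoid morphism because composition in a Cartesian product of posets is performed coordinate by coordinate, i.e. $(f_1\times\ldots\times f_n)\circ(g_1\times\ldots\times g_n)=(f_1g_1)\times\ldots\times(f_ng_n)$. It therefore suffices to show that $\Phi$ restricts to a bijection between $M_p^{K_1} \times \ldots \times M_p^{K_n}$ and $M_p^K$.

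The central step is the identity, for each $i \in [n]$ and each $N \in \sm_p(K_i)$,
$$P^{M} = \id_{K_1} \times \ldots \times P^{N} \times \ldots \times \id_{K_n}, \qquad M := \id_{K_1} \times \ldots \times N \times \ldots \times \id_{K_n},$$
which I would verify directly from the definition of a special partial idempotent. For $x=(x_1,\ldots,x_n)$ one has $M(x)=(x_1,\ldots,N(x_i),\ldots,x_n)$, and since covers in a Cartesian product occur in a single coordinate, the order relation between $x$ and $M(x)$ is governed entirely by that between $x_i$ and $N(x_i)$ in $K_i$. Splitting into the cases $x_i \leqslant N(x_i)$ and $N(x_i) \vartriangleleft x_i$ then gives $P^{M}(x)=(x_1,\ldots,P^{N}(x_i),\ldots,x_n)$ in both cases, which is the asserted equality.

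Finally I would invoke Theorem \ref{teorema cartesiano parziale}, by which every $M \in \sm_p(K)$ has the single-coordinate form above; consequently the generating set $\{\id_K\}\cup\{P^{M}:M\in\sm_p(K)\}$ of $M_p^K$ is precisely the image under $\Phi$ of the elements $(\id_{K_1},\ldots,P^{N},\ldots,\id_{K_n})$, with $N\in\sm_p(K_i)$, together with $\Phi(\id_{K_1},\ldots,\id_{K_n})$. Since a direct product of monoids is generated by its single-coordinate elements, these are exactly the generators of $M_p^{K_1}\times\ldots\times M_p^{K_n}$, so $\Phi$ maps this product monoid onto $M_p^K$; being injective, it restricts to the claimed isomorphism. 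I expect no genuine obstacle here, as Theorem \ref{teorema cartesiano parziale} already performs the structural work; the only points requiring care are the coordinate-wise verification of the displayed formula for $P^{M}$ and the routine bookkeeping that the product monoid is generated by single-coordinate elements. The argument is the verbatim partial analogue of the corollary following Theorem \ref{teorema cartesiano}.
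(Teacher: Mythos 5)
Your proof is correct and follows exactly the route the paper intends: the corollary is stated without proof as an immediate consequence of Theorem \ref{teorema cartesiano parziale}, and your argument simply makes explicit the routine verifications (the identity $P^{M}=\id_{K_1}\times\ldots\times P^{N}\times\ldots\times\id_{K_n}$ and the matching of generating sets under the coordinate-wise embedding) that the paper leaves to the reader. No gaps.
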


\begin{ex}
 Let $n\in \N$, $n>1$ and $n=p_1^{k_1}\cdots p_h^{k_h}$ be the prime factorization of $n$. Let $P_n:=\{z\in \N: z\mid n\}$ ordered by divisibility.
Then, by Example \ref{esempio catena parziale} and Theorem \ref{teorema cartesiano parziale} we have that
$|\sm_p(P_n)|=\sum \limits_{i=1}^h F_{k_i}$. 
\end{ex}
The following proposition extends Proposition \ref{prop interv} and Corollary \ref{corollario partizione}. It says that a special partial idempotent is an interval retract,
as defined in Section \ref{sezione special}. The proof is analogous to the one of Proposition \ref{prop interv}.
\begin{prop} \label{prop interv parziale}
 Let $P \in \Or(K)$ be a special partial idempotent and $v\in \imm(P)$. Then $P_v=[v,v^P]$, for some $v^P\in P_v$. A function $\imm(P) \rightarrow K$ is defined by the assignment $v \mapsto v^P$ and
 $$K=\biguplus \limits_{v\in \imm(P)} [v,v^P].$$
\end{prop}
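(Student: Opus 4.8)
The plan is to adapt the proof of Proposition \ref{prop interv} almost verbatim, since a special partial idempotent differs from a special idempotent only in that the generating matchings may now fix some elements (the case $M_p(x)=x$). First I would fix $v\in\imm(P)$ and write $P=P^{M_1}\cdots P^{M_k}$ with each $M_i\in\sm_p(K)$. As before, $P$ is regressive and order preserving (this was asserted just after the definition of $P^{M_p}$), so $P_v$ is already known to have minimum $v$, because $\min P_v=\{v\}$ holds for the image point of any idempotent in $\Or(K)$, exactly as recorded before Proposition \ref{proposizione galois}. The whole content is therefore to show that $P_v$ has a maximum $v^P$; once every fibre $P_v=[v,v^P]$ is an interval, the partition $K=\biguplus_{v\in\imm(P)}[v,v^P]$ follows formally from the fact that the fibres of any function partition its domain, exactly as in Corollary \ref{corollario partizione}.

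Next I would run the induction on $k$. For the base case $k=1$, a single special partial matching $M$ gives $P^M_v\in\{\{v\},\{v,M(v)\}\}$: if $M(v)=v$ or $v\vartriangleleft M(v)$ then $P^M_v=\{v\}$ with maximum $v$, and if $M(v)\vartriangleleft v$ then $v\notin\imm(P^M)$, so this case does not arise for $v\in\imm(P^M)$; thus the maximum exists. For the inductive step I set $P':=P^{M_1}\cdots P^{M_{k-1}}$, note $v\in\imm(P')$ by Lemma \ref{prop punti fissi}, and let $w:=\max P'_v$ by the inductive hypothesis. The claim, just as in Proposition \ref{prop interv}, is that $\max P_v$ equals $M_k(w)$ when $w\vartriangleleft M_k(w)$ and equals $w$ otherwise. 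Any $x\in P_v\setminus P'_v$ has the form $x=M_k(u)\vartriangleright u$ for some $u\in P'_v$, and the lifting property for $\sm_p(K)$ (the version stated in the proposition preceding the definition of $P^{M_p}$) gives $M_k(u)\leqslant w$ or $M_k(u)\leqslant M_k(w)$ according to the two cases, exactly reproducing the two displayed sub-cases of Proposition \ref{prop interv}.

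The one genuinely new point, and the step I expect to require the most care, is the possibility $M_k(u)=u$ that is absent for ordinary special matchings: when $u\in P'_v$ is fixed by $M_k$ we have $P^{M_k}(u)=u$, so such $u$ contributes nothing to $P_v\setminus P'_v$ and is harmless, but I must check that the bound $M_k(u)\leqslant\max P_v$ still holds for the $u$ with $u\vartriangleleft M_k(u)$, i.e. that the lifting property as stated for partial special matchings (which includes the hypothesis $M_k(w)\vartriangleleft w$ or $w\vartriangleleft M_k(w)$ but must accommodate fixed points in its premises $u\leqslant w$, $u\vartriangleleft M_k(u)$) still yields the same two inequalities. Since the lifting property for $\sm_p(K)$ has precisely the form $x\leqslant y$, $x\leqslant M_p(x)$, $M_p(y)\leqslant y$ $\Rightarrow$ $M_p(x)\leqslant y$ and $x\leqslant M_p(x)$, its application to $(u,w)$ is identical to the classical case, so the argument closes with no extra hypotheses. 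I would then record that the assignment $v\mapsto v^P$ defines a function $\imm(P)\to K$, completing the proof.
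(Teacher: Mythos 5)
Your proposal takes exactly the route the paper intends: the paper omits the proof of Proposition \ref{prop interv parziale} precisely because it is the proof of Proposition \ref{prop interv} rerun with the partial lifting property, and you correctly isolate the only genuinely new point, namely the extra sub-case $M_k(w)=w$, which the hypothesis $M_p(y)\leqslant y$ in the partial lifting property absorbs together with $M_k(w)\vartriangleleft w$. One small correction to your base case: when $v\vartriangleleft M(v)$ the fibre is $P^M_v=\{v,M(v)\}$, since $M(M(v))=v\vartriangleleft M(v)$ forces $P^M(M(v))=v$; only the fixed-point case $M(v)=v$ gives the singleton $\{v\}$. This slip is harmless because the induction only needs the \emph{existence} of $\max P'_v$, but the base-case statement should read $\max(P^M_v)=M(v)$ if $v\vartriangleleft M(v)$ and $\max(P^M_v)=v$ if $M(v)=v$, mirroring the formula in the proof of Proposition \ref{prop interv}.
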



We end by extending Theorems \ref{immagine graduata} and \ref{complemento graduato}; we omit the proofs since they can be carried out in the same way. In fact, the arguments
use the definition of projection, the lifting property and the fact that $P_v$ is an interval, for all $v$ in the image of $P$.
\begin{thm} \label{immagine graduata parziale}
  Let $K$ be Eulerian and $P\in \Or(K)$ a special partial projection. Then the posets $\imm(P)$ and $[K\setminus \imm(P)]\cup \{\hat{0}\}$ are graded with rank function $\rho$.
\end{thm}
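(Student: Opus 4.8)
The plan is to establish Theorem~\ref{immagine graduata parziale} by reducing it, as the authors announce, to the two arguments already carried out for Theorems~\ref{immagine graduata} and~\ref{complemento graduato}, checking at each step that the only ingredients used there remain available in the partial setting. First I would record the three facts that the earlier proofs actually rely upon: (i) the lifting property for the matchings defining $P$, which holds by the characterization of $\sm_p(K)$ recalled just before; (ii) the fact that $P_v$ is an interval $[v,v^P]$ for every $v\in\imm(P)$, which is exactly Proposition~\ref{prop interv parziale}; and (iii) the defining property of a projection, namely that $[x,y]\cap P_x$ is an interval of $K$ for all $x,y\in\imm(P)$ with $x\leqslant y$. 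Since these are precisely the hypotheses of a \emph{special partial projection}, the structural input is identical to the non-partial case.

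For the gradedness of $\imm(P)$, I would repeat verbatim the argument of Theorem~\ref{immagine graduata}. Given $u,v\in\imm(P)$ with $u\leqslant v$ and $\rho(u,v)>1$, the set $C(u,v)$ of coatoms of $[u,v]$ has more than one element because $K$ is Eulerian. Assuming no coatom lies in $\imm(P)$, the projection hypothesis forces the fibers $[P(y),y]=[P(z),z]$ to be disjoint intervals for distinct coatoms $y,z$, and choosing $y\in C(u,v)$ with $P(y)$ maximal among $\{P(z):z\in C(u,v)\}$ yields that $[P(y),v]$ has all its coatoms among those of $[u,v]$ but a unique one below it, contradicting Eulerianness. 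Nothing in this reasoning distinguishes partial matchings from total ones, so it transfers unchanged and shows $\imm(P)$ is graded with rank $\rho$.

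For the complement $[K\setminus\imm(P)]\cup\{\hat{0}\}$, I would transcribe the proof of Theorem~\ref{complemento graduato}, writing $P=P^{M_1}\cdots P^{M_k}$ with each $M_i\in\sm_p(K)$. Since $P(\hat{1})=\hat{1}$ still forces $P=\id_K$, the maximum $\hat{1}$ of $K\setminus\imm(P)$ is available when $P\neq\id_K$. Given $u,v\notin\imm(P)$ with $u\leqslant v$ and $\rho(u,v)>1$, one either invokes Lemma~\ref{lemma complemento} directly when $u\nless P(v)$, or runs the induction on $\rho(u,v)$ exactly as before, the rank-$2$ base case and the dichotomy on whether $y\vartriangleleft M_i(y)$ both resting only on the lifting property and Eulerianness. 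The one point I would verify carefully is that Lemma~\ref{lemma complemento} is stated for an arbitrary idempotent $P\in\Or(K)$ and so applies verbatim, and that the inductive step's appeal to a second coatom $w\vartriangleleft v$ with $w\neq P(v)$ again uses only that $K$ is Eulerian.

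The main obstacle I anticipate is not in the combinatorial core but in a preliminary bookkeeping issue: the possibility that some $M_i$ fixes points, i.e. $M_i(x)=x$ for certain $x$, which is permitted for partial matchings but excluded for ordinary matchings. I would therefore check that the case analysis ``$y\vartriangleleft M_i(y)$ versus $M_i(y)\vartriangleleft y$'' in the complement proof, and the analogous dichotomy in the image proof, remain exhaustive once the fixed-point case $M_i(y)=y$ is included; in that case $P^{M_i}(y)=y$ and the relevant element already lies in the appropriate fiber, so the argument either simplifies or is vacuous. Confirming that this third alternative never breaks the lifting-property inequalities—and that the existence of an index $i$ with $M_i(v)\vartriangleleft v$ is still guaranteed when $v\notin\imm(P)$ by Lemma~\ref{prop punti fissi}—is the only genuinely new verification, and it is routine; hence the proofs can indeed be omitted as the authors do.
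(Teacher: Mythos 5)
Your proposal matches the paper's approach exactly: the paper omits the proof of Theorem~\ref{immagine graduata parziale}, remarking only that the arguments of Theorems~\ref{immagine graduata} and~\ref{complemento graduato} carry over because they use nothing beyond the definition of projection, the lifting property, and the fact that each fiber $P_v$ is an interval --- precisely the three ingredients you isolate. Your extra check of the fixed-point case $M_p(x)=x$ (covered by the weak inequalities in the partial lifting property, and absorbed into the second branch of the case analysis in the complement proof) is the one genuinely new verification, and it goes through as you indicate.
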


\end{document}